\newtheorem{lemma}[theorem]{Lemma}
\theoremstyle{definition}
\newtheorem{definition}[theorem]{Definition}
\theoremstyle{remark}
\newtheorem{remark}[theorem]{Remark}
\numberwithin{equation}{section}
\begin{document}

\title{On Heegaard Floer minimal knots in sutured manifolds}

\author{Fraser Binns}
\address{Department of Mathematics, Princeton University}
\email{fb1673@princeton.edu}


\thanks{FB was supported by the Simons Grant {\em New structures in low-dimensional topology}}

\subjclass[2020]{Primary 54C40, 14E20; Secondary 46E25, 20C20}

\date{\today}

\keywords{Sutured Floer homology, spherical braids, the botany problem.}

\begin{abstract}
  Li-Xie-Zhang classified instanton Floer minimal knots in balanced sutured manifolds subject to a condition on the fundamental group. In this paper, we give a similar classification in the Heegaard Floer homology setting. Since our classifications agree when they are both applicable, this provides further evidence for the conjecture of Kronheimer-Mrowka that instanton Floer homology and Heegaard Floer homology are isomorphic. We also study link Floer homology botany question in $S^1\times S^2$, showing that link Floer homology detects spherical braid closures among homologically nontrivial links.
\end{abstract}

\maketitle

Heegaard Floer homology is a powerful package of invariants due to Ozsv\'ath-Szab\'o~\cite{ozsvath2004holomorphic}. To each $3$-manifold it assigns a vector space $\widehat{\HF}(Y)$. To each knot $K$ in a $3$-manifold $Y$ it assigns another vector space called \emph{knot Floer homology},  $\widehat{\HFK}(K,Y)$, due independently to J. Rasmussen~\cite{Rasmussen} and Ozsv\'ath-Szab\'o~\cite{Holomorphicdisksandknotinvariants}. The knot Floer homology of a rationally null-homologous knot, $K$, in a $3$-manifold, $Y$, is related to the Heegaard Floer homology of $Y$ by the rank bound:
\begin{equation}\label{eq:rankbound}
    \rank(\widehat{\HFK}(K,Y))\geq \rank(\widehat{\HF}(Y)).
\end{equation}
See Section~\ref{sec:review} for some discussion. In this paper we are broadly interested in the question of classifying links with fixed knot Floer homology; the \emph{botany question}. In attempting to address the botany question in general $3$-manifolds, it is natural to start with those knots for which rank inequality~(\ref{eq:rankbound}) is tight, namely \emph{(Heegaard) Floer simple knots}. These knots also arise in the context of the Berge conjecture; see~\cite{hedden2011floer}. The only (Heegaard) Floer simple knot in $S^3$ is the unknot, which follows from work of Ozsv\'ath-Szab\'o~\cite[Theorem 1.2]{ozsvath2004genusbounds}. Examples of other Floer simple knots can be obtained as the cores of $n>2g(K)-1$-surgeries on a type of knot called an ``$L$-space knots $K$"; a result of Hedden~\cite[Theorem 1.4]{hedden2011floer}. The class of Heegaard Floer simple knots in $3$-manifolds is also closed under an appropriate version of the connect sum operation; see~\cite{ni2014homological}.

Juh\'asz extended Heegaard Floer homology to a class of $3$-manifolds with boundary called \emph{balanced sutured manifolds}~\cite{juhasz2006holomorphic}. A \emph{sutured manifold} $(Y,\gamma)$ is an oriented manifold $Y$ equipped with a decomposition of $\partial Y=R_+(\gamma)\cup\gamma\cup R_-(\gamma)$ that satisfies various conditions; see Section~\ref{sec:review} for further details. Such manifolds were introduced by Gabai in the study of taut  foliations~\cite{gabai1983foliations}. To each sutured manifold, $(Y,\gamma)$, sutured Floer homology associates a vector space $\SFH(Y,\gamma)$. If $K$ is a knot in $(Y,\gamma)$, then the exterior of $K$ in $Y$ is naturally endowed with the structure of a sutured manifold $(Y(K),\gamma(K))$. Note that $H_1(\partial Y;\Q)$ maps naturally to $H_1(Y;\Q)$. If $[K]=0\in H_1(Y;\Q)/H_1(\partial Y;\Q)$ then we have the following generalization of inequality~(\ref{eq:rankbound}):
\begin{equation}\label{eq:rankbound2}
    \rank(\SFH(Y(K),\gamma(K)))\geq 2\rank(\SFH(Y,\gamma)).
\end{equation}

See Lemma~\ref{lem:spectral} for further details. Our main result reduces the problem of determining when inequality~(\ref{eq:rankbound2}) is tight to the problem of determining when inequality~(\ref{eq:rankbound}) is tight.

\begin{restatable}{theorem}{main}\label{thm:main}
    Suppose that $K$ is a knot in a balanced sutured manifold $(Y,\gamma)$ with $[K]=0\in H_1(Y;\Q)/H_1(\partial Y;\Q)$. If $\rank(\SFH(Y(K),\gamma(K)))=2\cdot\rank(\SFH(Y,\gamma))\neq 0$ then $Y$ splits as a connect sum $M\# (Y',\gamma)$ with $M$ a closed $3$-manifold and $K$ a Floer simple knot in $M$.
\end{restatable}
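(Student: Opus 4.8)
The plan is to convert the numerical equality into a rigid splitting of the sutured Floer homology of the exterior of $K$, then to use that splitting to localize $K$ inside a closed connect summand of $Y$, after which the Floer simplicity is formal. Throughout, the hypothesis $[K]=0\in H_1(Y;\Q)/H_1(\partial Y;\Q)$ is used to guarantee that $K$ admits a rational Seifert surface, so that $\SFH(Y(K),\gamma(K))$ carries an Alexander (relative $\spin^c$) grading and the spectral sequence underlying inequality~(\ref{eq:rankbound2}) is available.

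First I would record the consequence of equality. By Lemma~\ref{lem:spectral}, inequality~(\ref{eq:rankbound2}) arises from a spectral sequence beginning at $\SFH(Y(K),\gamma(K))$ and abutting to $\SFH(Y,\gamma)\otimes V$ with $\dim_\F V=2$. The hypothesis $\rank\SFH(Y(K),\gamma(K))=2\rank\SFH(Y,\gamma)$ forces every higher differential to vanish, yielding a graded isomorphism
\[
\SFH(Y(K),\gamma(K))\;\cong\;\SFH(Y,\gamma)\otimes V.
\]
This is the rigidity I will exploit. Since $\SFH(Y,\gamma)\neq 0$, Juh\'asz's theorem that sutured Floer homology detects tautness shows that $(Y,\gamma)$ and $(Y(K),\gamma(K))$ are taut, so both admit sutured manifold hierarchies.

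The heart of the argument is to show that this rigidity forces $K$ to lie inside a closed connect summand of $Y$, i.e.\ that there is an embedded sphere $S\subset Y$ separating a punctured closed manifold $M^\circ\supset K$ from a region carrying all of $\partial Y$ and $\gamma$. The plan is to run a sutured manifold hierarchy on $(Y(K),\gamma(K))$ adapted to the rational Seifert surface and to track the effect on $\SFH$ at each stage via Juh\'asz's surface decomposition theorem, which realizes the homology of a decomposition as a direct summand of $\SFH(Y(K),\gamma(K))$. The minimality forces each decomposition to preserve ranks in the appropriate sense, and hence obstructs any sutured complexity from being ``linked'' with $K$: the portion of $Y$ lying between $K$ and $\partial Y$ must split off as a connect summand along a reducing sphere. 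I expect this step to be the main obstacle, since the minimality of $\SFH$ does not literally detect embedded spheres; the delicate point is to run Gabai's hierarchy compatibly with the $\SFH$-rigidity and to promote the resulting decomposition to an honest reducing sphere, and this is where I anticipate both the nonvanishing and the homological hypotheses to be essential.

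Once the splitting $Y=M\#(Y',\gamma)$ with $K\subset M$ is in hand, the remainder is formal. The connect sum formula for sutured Floer homology gives $\SFH(Y(K),\gamma(K))\cong\SFH(M(K))\otimes\SFH(Y',\gamma)$ and $\SFH(Y,\gamma)\cong\widehat{\HF}(M)\otimes\SFH(Y',\gamma)$, and $\SFH(Y',\gamma)\neq 0$ because $\SFH(Y,\gamma)\neq 0$. Cancelling this common nonzero tensor factor in the equality hypothesis identifies the equality case of~(\ref{eq:rankbound2}) for $K\subset Y$ with the equality case of the closed rank inequality~(\ref{eq:rankbound}) for $K\subset M$, which is precisely the statement that $K$ is a Floer simple knot in the closed manifold $M$.
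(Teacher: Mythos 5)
Your opening and closing moves match the paper: the collapse of the spectral sequence of Lemma~\ref{lem:spectral} into a graded isomorphism $\SFH(Y(K),\gamma(K))\cong\SFH(Y,\gamma)\otimes V$, and the final K\"unneth cancellation once a splitting $Y=M\#(Y',\gamma)$ is in hand. But the step you yourself flag as ``the main obstacle'' is exactly the content of the theorem, and your sketch leaves it unproved. Two concrete corrections. First, your assertion that nonvanishing of $\SFH$ forces tautness is false without irreducibility (a connected sum has nonzero $\SFH$ by the K\"unneth formula but is never taut), and the paper has to earn irreducibility: it first strips off spherical boundary components, then applies a Kneser-type prime decomposition (Lemma~\ref{lem:knesser}) together with the K\"unneth formula. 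Crucially, this is where the closed summand $M$ actually comes from --- if the prime piece containing $\partial_K Y(K)$ has no other boundary, it is the exterior of a knot in a closed manifold and the rank equality descends to make $K$ Floer simple there. The reducing sphere is therefore not something you ``promote'' out of the hierarchy at the end; it is supplied up front by prime decomposition, and the role of the hierarchy is to rule out the complementary case, in which $\partial_K Y(K)$ shares a prime piece with part of $\partial Y$.

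Second, you do not supply the mechanism by which that complementary case is excluded. The paper uses Lemma~\ref{lem:sinhom} to produce a class $\alpha\in H_2(Y(K),\partial Y(K))$ with $\alpha\cap[\mu_K]=0$, runs the hierarchy of Theorem~\ref{thm:hierarchy} (inducting through product-disk and $\widehat{\Sigma}\cap K=\emptyset$ decompositions, where Juh\'asz's decomposition theorem and the graded rank equality of Lemma~\ref{lem:spectral} let the hypothesis descend), and terminates at a taut surface $\Sigma_n$ meeting $\partial_K Y(K)$ in meridians. The contradiction is then a grading-span computation: by Lemma~\ref{lem:maxagrading} the extremal $A_\alpha$-grading of $\SFH$ is computed by $c(S,t)=\chi(S)+I(S)-r(S,t)$ for a taut representative $S$, and capping off the meridional boundary components of $S$ with disks in $Y$ preserves $I$ and $r$ but strictly increases $\chi$, so the $A_{H_K(\alpha)}$-span of $\SFH(Y(K),\gamma(K))$ strictly exceeds the $A_\alpha$-span of $\SFH(Y,\gamma)\otimes V$, contradicting the graded isomorphism. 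Without this argument (or a substitute), your proposal establishes only the formal bookkeeping around the theorem, not the theorem itself.
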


\noindent We defer the definition of balanced to Section~\ref{sec:review}.

There is another package of invariants in low dimensional topology that predates Heegaard Floer homology called \emph{instanton Floer homology}; see~\cite{FLoerinstanton3manifolds,Floerinstatonhomologysurgeryknots} for two archetypal instances. This package has seen substantial development by Kronheimer-Mrowka~\cite{kronheimer2010knots,kronheimer2011khovanov,kronheimer2011knot} as well as a number of other authors. Kronheimer-Mrowka conjecture that appropriate versions of instanton Floer homology and Heegaard Floer homology are isomorphic~\cite[Conjecture 7.24]{kronheimer2010knots}. Li-Xie-Zhang show that if $K$ is a knot in the connect sum $M\#(Y,\gamma)$ of a closed manifold $M$ and a sutured manifold $(Y,\gamma)$ satisfying an instanton Floer homology analogue of inequality~(\ref{eq:rankbound2}) as well as some other conditions, then $K$ is~\emph{instanton} Floer simple in $M$~\cite[Theorem 1.4]{li2022floer}. Here, a knot is~\emph{instanton Floer simple} if the instanton Floer homology version of inequality~(\ref{eq:rankbound}) is tight. Li-Xie-Zhang's proof is dependent on various techniques available in instanton Floer homology that do not currently have analogues in Heegaard Floer homology. In particular, they use a version of sutured instanton Floer homology with local coefficients --- see~\cite[Section 2.5]{li2022floer} --- as well as a generalized version of Kronheimer-Mrowka's $\Inat(Y,K)$~\cite{kronheimer2011khovanov} --- see~\cite[Section 3.2]{li2022floer}. Theorem~\ref{thm:main} can be viewed as a version of Li-Xie-Zhang's result in the Heegaard Floer setting. In particular, the combination of the two results provides further evidence for the equivalence of Heegaard Floer homology and instanton Floer homology, at least when the hypotheses of both Theorem~\ref{thm:main} and~\cite[Theorem 1.4]{li2022floer} apply.

 Several classification results follow from Theorem~\ref{thm:main}. For example:

\begin{corollary}
    Suppose that $L$ is an $n$-component link in $S^3$ containing a component $T(2,2m+1)$ for some $m$. If $\rank(\widehat{\HFK}(L))\leq 2^{n-1}|2m+1|$ then $L$ is the split sum of an $n-1$ component unlink and $T(2,2m+1)$.
\end{corollary}
\begin{proof}
    
Suppose that $L$ is as in the statement of the Corollary. Fix a component $K$ of $L$ that is $T(2,2m+1)$. Inequality~(\ref{eq:rankbound2}) gives rank bounds $\rank(\widehat{\HFK}(L_i))\geq 2\rank(\widehat{\HFK}(L_{i-1}))$ for $i\in\{n,n-1,\dots 2\}$, where $L_{i-1}$ is obtained from $L_{i}$ by removing a component other than $K$ and $L_n:=L$.  Since $\rank(\widehat{\HFK}(T(2,2m+1)))=|2m+1|$, each inequality $\rank(\widehat{\HFK}(L_i))\geq 2\rank(\widehat{\HFK}(L_{i+1}))$ is tight. An application of Theorem~\ref{thm:main} shows that $L_{i+1}$ is obtained from $L_i$ by adding a split unknotted component. This concludes the proof.\end{proof}

 To place these results in context, recall that Ni proved that the $n$-component unlink is the only link $L$ with ${\rank(\widehat{\HFK}(L))=2^{n-1}}$\cite[Proposition 1.4]{ni2014homological}, while Kim showed that the only $n$ component link $L$ with $\rank(\widehat{\HFK}(L))=2^{n}$ is the split sum of an $n-2$ component unlink and the Hopf link~\cite[Theorem 1]{kim2020links}.

Finally, we give some new classification results for links in $S^1\times S^2$. To state them, recall that a spherical braid closure is a link in $S^1\times S^2$ can be obtained from a spherical braid, $\beta$, by taking its closures, as indicated in Figure~\ref{fig:sphericalbraidclosure}. We denote the closure of the spherical braid $\beta$ by $\widehat{\beta}$.

 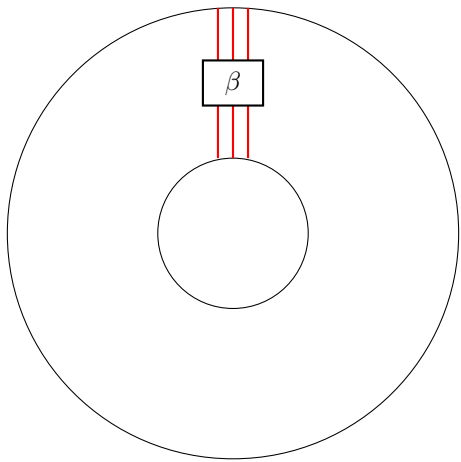
\begin{figure}[h]
    \centering

    \begin{tikzpicture}
         (0,0) circle (3);
        \draw (0,0) circle (3);

         (0,0) circle (1);
        \draw (0,0) circle (1);

        \draw[thick,red] (-0.2,1) -- (-0.2,3);
        \draw[thick,red] (0.2,1) -- (0.2,3);
         \draw[thick,red] (0,1) -- (0,3);

        \draw[fill=white, thick] (-0.4,1.7) rectangle (0.4,2.3);
        \node at (0,2) {\(\beta\)};
    \end{tikzpicture}

    \caption{The closure of a spherical braid is obtained from a spherical braid $\beta\subset S^2\times[-1,1]$ by identifying $S^2\times\{\pm 1\}$. In the figure, the outer and inner circles indicate $S^2\times\{\pm 1\}$.}
    \label{fig:sphericalbraidclosure}
\end{figure}

\begin{restatable}{corollary}{sphericalthreebraids}\label{cor:sphericalthreebraids}
     Suppose that $L$ is a homologically non-trivial, non-split link in $S^1\times S^2$ with ${\widehat{\HFL}(L)\cong\widehat{\HFL}(\widehat{\beta})}$ where $\widehat{\beta}$ is any of the nine spherical $n$-braid closures with $n\leq 3$. Then $L$ is a spherical braid closure of a braid $\alpha$ of the same index as $\beta$.
\end{restatable}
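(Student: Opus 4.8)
The plan is to realize the corollary as a computation-fed application of the principle that link Floer homology recognizes spherical braid closures among homologically nontrivial, non-split links. The first task is to pin down what the nine reference closures are and what data each carries. Since closures of conjugate braids are isotopic, the spherical $n$-braid closures with $n\le 3$ are indexed by conjugacy classes of the spherical braid groups: $B_1(S^2)$ is trivial, $B_2(S^2)\cong\Z/2$, and $B_3(S^2)$ is the dicyclic group of order $12$, with $1$, $2$, and $6$ conjugacy classes respectively, accounting for exactly nine closures. For each I would record its index $n$ and its number of components $k$, the latter being the number of cycles of the image permutation in $S_n$, and then compute $\widehat{\HFL}(\widehat\beta)$ using that the complement of $\widehat\beta$ fibers over $S^1$ with fiber the $n$-punctured sphere.

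Next I would set up the detection engine. Writing $\widehat{\HFL}(L)$ as the sutured Floer homology of the complement of $L$ equipped with two meridional sutures on each boundary torus, the hypothesis $\widehat{\HFL}(L)\cong\widehat{\HFL}(\widehat\beta)$ becomes an isomorphism of sutured invariants respecting the Alexander multigrading; in particular the number of components, which equals the number of Alexander gradings, matches immediately. I would then transport three further facts across the isomorphism: fiberedness of the complement, via the rank of the extremal Alexander grading (Ni's fiberedness detection in the sutured setting); the Thurston norm of the fiber class, hence the Euler characteristic of a fiber (Juhász's and Friedl--Juhász--Rasmussen's norm detection); and the homology class $[L]=n\in H_1(S^1\times S^2)=\Z$, hence the algebraic intersection number of $L$ with the $S^2$-fiber. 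Together these say that $L$ is a fibered, homologically nontrivial link whose fiber has the same Euler characteristic and boundary as the $n$-punctured sphere.

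The geometric heart of the application is the upgrade from this Floer-theoretic data to the statement that $L$ is a spherical braid closure of index $n$. Here I would use that $S^1\times S^2$ admits, up to isotopy, only the product fibration over $S^1$, whose fiber is $S^2$: once the complement of $L$ is known to fiber in the homology direction dual to $[L]$, the fiber must cap off along the meridional disks to this $S^2$, forcing it to be planar and $L$ to be transverse to the $S^2$-fibration. A link transverse to that fibration and meeting each fiber in $n$ points is precisely a spherical braid closure of index $n$, with the index recovered as the geometric intersection number with the fiber and the $k$ components recovered from the cycle structure; both agree with those of $\widehat\beta$ by the detected data. Applying this to each of the nine cases—after checking that each $\widehat{\HFL}(\widehat\beta)$ lands in the range where the fiberedness and norm detections apply—yields that $L=\widehat\alpha$ for a braid $\alpha$ of the same index and component count as $\beta$. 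The non-split hypothesis is what keeps the relevant fibration connected and prevents a reducing sphere from obstructing the capping argument.

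The main obstacle I expect is this last geometric step, and specifically guaranteeing that the detected fibration is compatible with the product fibration of $S^1\times S^2$ rather than being realized in a different homology direction or by an incompatible taut foliation; ruling this out is where homological nontriviality and the non-split hypothesis do their real work. Among the nine cases I anticipate the index-$3$ closures to be the most delicate: the dicyclic braid group produces several conjugacy classes whose closures may share ranks in low Alexander gradings, so separating the index and the component count will require the full multigraded invariant rather than the total rank, and the fiberedness input must be verified uniformly across all six three-braid closures.
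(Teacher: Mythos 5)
Your proposal is correct and follows essentially the same route as the paper: rule out reducing spheres using non-splitness and homological nontriviality, detect that $L$ is a spherical braid closure from the rank of the extremal $A_{S^2}$-graded piece (your ``fiberedness plus capping-off'' argument is exactly the mechanism behind Theorem~\ref{thm:sphericalbraiddetection}, which the paper cites directly), recover the index from Thurston norm detection of $\chi$ of the fiber together with the fact that $S^1\times S^2$ fibers only with sphere fiber, and finish with the enumeration of spherical braid closures by conjugacy classes as in Lemma~\ref{lem:3braidss1s2}. The delicate points you flag --- identifying the correct grading direction and distinguishing the index-$2$ from index-$3$ closures via the full multigraded invariant --- are real but are treated at the same level of detail in the paper's own proof.
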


The hypothesis that $L$ has irreducible exterior is readily removed by an application of the K\"unneth formula, Equation~(\ref{eq:Kunneth}). Corollary~\ref{cor:sphericalthreebraids} follows from the classification of spherical $3$-braids up to conjugation, which we recall in Lemma~\ref{lem:3braidss1s2}, as well as the following result:

\begin{restatable}{theorem}{sphericalbraiddetection}\label{thm:sphericalbraiddetection}
     Suppose that $L$ is an $n$-component homologically non-trivial link in $S^1\times S^2$ with irreducible exterior and $|L\cap (\{*\}\times S^2)|\neq 1$. Then $\widehat{\HFK}(L,S^1\times S^2)$ is of rank at least $2^n$ in the maximum non-trivial $A_{S^2}$ grading with equality if and only if $L$ is a spherical braid closure.
 \end{restatable}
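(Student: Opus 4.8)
The plan is to localize the maximal $A_{S^2}$ grading of $\widehat{\HFK}(L,S^1\times S^2)$ on the sphere $\hat S:=\{*\}\times S^2$ by a sutured decomposition, so that computing it becomes a question about the complement of a tangle in $S^2\times[-1,1]$, and then to recognize spherical braid closures as exactly those links for which this cut-open complement is a product sutured manifold.

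First I would put $L$ in minimal position with respect to $\hat S$, so that $n=|L\cap\hat S|$ and $P:=\hat S\setminus N(L)$ is an $n$-punctured sphere properly embedded in the link exterior $Y(L)$. Giving $Y(L)$ meridional sutures identifies $\widehat{\HFK}(L,S^1\times S^2)$ with $\SFH(Y(L))$ up to the standard $V$-factors relating $\widehat{\HFK}$ and $\widehat{\HFL}$, and the $A_{S^2}$ grading is the grading dual to $[P]\in H_2(Y(L),\partial Y(L))$. The crux of this step is that, at the \emph{maximal} nontrivial grading, all intersection points of $L$ with $\hat S$ contribute with the same sign, so $P$ carries a coherent orientation and Juh\'asz's sutured decomposition theorem applies: the top-grading summand is isomorphic to $\SFH(M,\gamma)$, where $(M,\gamma)$ is obtained by decomposing $Y(L)$ along $P$. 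Since cutting $S^1\times S^2$ along $\hat S$ produces $S^2\times[-1,1]$, the manifold $(M,\gamma)$ is the exterior in $S^2\times[-1,1]$ of the $n$-strand tangle $T:=L\cap(S^2\times[-1,1])$.

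Next I would establish the inequality and the equality case. The hypotheses that the exterior of $L$ is irreducible and that $L$ is in minimal position make $(M,\gamma)$ taut, and the lower bound $\rank\SFH(M,\gamma)\ge 2^n$ follows by comparing $(M,\gamma)$ with the product sutured manifold on the $n$-punctured sphere, using the rank machinery of Lemma~\ref{lem:spectral}, inequality~(\ref{eq:rankbound2}), and the K\"unneth formula, Equation~(\ref{eq:Kunneth}); the value $2^n$ is exactly the contribution of the $n$ strands of $T$ together with the meridional sutures. The equality $\rank\SFH(M,\gamma)=2^n$ is then equivalent, by Juh\'asz's theorem detecting product sutured manifolds, to $(M,\gamma)$ being the product $\bigl(P\times[-1,1],\ \partial P\times[-1,1]\bigr)$. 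A product structure says precisely that every strand of $T$ is monotone in the $[-1,1]$-direction, i.e. that $T$ is a spherical braid $\beta$ of index $n$ and $Y(L)$ fibers over $S^1$ with fiber $P$; equivalently $L=\widehat\beta$. Conversely, for any spherical braid the cut-open complement is manifestly this product, which gives the value $2^n$ directly and supplies the ``if'' direction.

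I expect the main obstacle to be this equality analysis, in two interlocking respects. First, one must confirm that the maximal nontrivial $A_{S^2}$ grading is genuinely realized by the decomposition along $P$, rather than by a decomposing surface of smaller complexity; this is a sutured Thurston-norm computation in $S^1\times S^2$ that uses minimal position and the hypothesis $n\neq 1$, which excludes the degenerate configuration in which $\hat S$ could be isotoped across a single strand and local knotting would go undetected. Second, one must run the product-detection step with the correct normalization, tracking carefully the $V$-factors between $\widehat{\HFK}(L,S^1\times S^2)$ and $\SFH(M,\gamma)$ so that minimal rank $2^n$ forces $(M,\gamma)$ to be a genuine product and not merely horizontally taut; irreducibility of the exterior is exactly what allows one to upgrade tautness to the product structure and conclude that $L$ is a spherical braid closure.
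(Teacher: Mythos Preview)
Your decomposition-and-product-detection outline captures only the easy half of the argument. The real content of the theorem --- and the reason it appears \emph{after} Theorem~\ref{thm:main} --- is the case where some components of $L$ are null-homologous and hence, in minimal position, miss $\hat S$ entirely. You never address this case, and your claimed lower bound is miscalibrated in a way that hides it.

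Concretely, your bound $\rank\SFH(M,\gamma)\ge 2^{|L\cap\hat S|}$ is false: if $L$ is a spherical braid closure then $(M,\gamma)$ is a product sutured manifold and $\rank\SFH(M,\gamma)=1$, not $2^{|L\cap\hat S|}$. Juh\'asz's product-detection theorem characterizes products by $\rank=1$, not by $\rank=2^n$, so the sentence ``equality $\rank\SFH(M,\gamma)=2^n$ is then equivalent \ldots\ to $(M,\gamma)$ being a product'' cannot be right with your $n$. In the paper the exponent $n$ in Lemma~\ref{lem:spherebraidlemma} counts the components of $L$ that do \emph{not} meet the taut representative $S$ of $[\{*\}\times S^2]$; these are exactly the null-homologous components. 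For a braid closure this $n$ equals $0$, and the $\rank=1=2^0$ case is the product case. The strands that do cross the sphere contribute nothing beyond the product structure.

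The substantive step you are missing is therefore: when $n\ge 1$ (i.e.\ $L$ has null-homologous components), why is $\rank\SFH(Y(L),\gamma(L))>2^n$ strictly? The paper proves this by induction on $n$ together with Theorem~\ref{thm:main}: each null-homologous component $L_i$ sits inside the decomposed sutured manifold, and if equality held one such $L_i$ would be Floer-minimal there; Theorem~\ref{thm:main} then forces a closed connect-summand containing $L_i$, contradicting irreducibility of the exterior (or forcing further reductions handled in the proof of Lemma~\ref{lem:spherebraidlemma}). Your appeal to ``Lemma~\ref{lem:spectral}, inequality~(\ref{eq:rankbound2}), and the K\"unneth formula'' does not supply this; those ingredients give the rank inequality for a single knot but not the strict inequality that rules out null-homologous components. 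Without invoking Theorem~\ref{thm:main} (or reproving its content), your argument cannot distinguish a spherical braid closure from, say, a spherical braid closure with a small null-homologous circle linked in.
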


Here a homologically non-trivial link is a link $L$ for which $[L]\neq 0\in H_1(S^1\times S^2)$. Note that a homologically non-trivial link may have homologically trivial sublinks. $|L\cap (\{*\}\times S^2)|$ indicates the minimal geometric intersection number of $L$ with $\{*\}\times S^2$. We defer the definition of the grading $A_{S^2}$ to Section~\ref{sec:review}. This result is a version of Martin’s braid detection result for link Floer homology~\cite[Proposition
1.1]{martin2022khovanov}. Note that, as a consequence of~\cite[Proposition 9.18]{juhasz2006holomorphic} and~\cite[Theorem 1.4]{juhasz2008floer}, $|L\cap (\{*\}\times S^2)|=1$ if and only if $\rank(\widehat{\HFK}(S^1\times S^2,L))=0$, in which case $L$ is the closure of the spherical $1$ braid, i.e. the core of $0$ surgery on the unknot in $S^3$. The hypothesis that $L$ has irreducible exterior in the statement of Theorem~\ref{thm:sphericalbraiddetection} is again readily removed by appropriate applications of the K\"unneth formula, Equation~(\ref{eq:Kunneth}).

 This paper is organized as follows. In Section~\ref{sec:review} we review sutured manifolds and sutured Floer homology and prove some preparatory lemmas. Section~\ref{sec:mainthm} is devoted to the proof of the main theorem. In Section~\ref{sec:S1S2} we study the link Floer homology of links in $S^1\times S^2$. 
\subsection*{Acknowledgments}  The author would like to offer a special thanks to Yi Ni. Ni's mathscinet review for Li-Xie-Zhang's paper~\cite{li2022floer} served as an initial inspiration for this project, and he also provided the author useful feedback on earlier drafts of this paper. He is also grateful to Subhankar Dey, Sudipta Ghosh, Boyu Zhang and Claudius Zibrowius for various other useful conversations on related topics. Finally he would like to thank the referee for their close reading of the paper.

\section{Sutured Manifolds and Floer homology}\label{sec:review}
We begin by providing some background on sutured manifolds, in Section~\ref{subsec:suturedmnflds}, and sutured Floer homology, in Section~\ref{subsec:SFH}. We also prove a few preparatory results. Specifically, we show the existence of certain sutured hierarchies in Theorem~\ref{thm:hierarchy}, although we note this result essentially amount to a rephrasing of~\cite[Theorem 2.9]{li2022floer} that we find more convenient for our purposes. This will be used in the proof of Theorem~\ref{thm:sphericalbraiddetection} and Theorem~\ref{thm:main}.
\subsection{Sutured Manifolds}\label{subsec:suturedmnflds} Sutured manifolds were first introduced by Gabai to study taut foliations~\cite{gabai1983foliations}. A \emph{sutured manifold} $(Y,\gamma)$ is an oriented manifold $Y$ with non-empty boundary equipped with a decomposition of $\partial Y$ into three pieces $R_+(\gamma)$, $R_-(\gamma)$ and $\gamma$ where, for us, $\gamma$ is a disjoint union of annuli. These pieces are required to satisfy some orientability conditions --- see~\cite[Definition 2.6]{gabai1983foliations}. The union of the cores of the annuli $\gamma$ will be denoted by $s(\gamma)$.

Examples of sutured manifolds include $(Y(K),\mu_K)$, the sutured manifold with $Y(K)$ given by the exterior of a knot $K$ in a closed $3$-manifold $Y$, and $s(\mu_K)$ given by a pair of appropriately oriented meridians of $K$. More generally, given a sutured manifold $(Y,\gamma)$ and a knot $K$ in the interior of $Y$, one can construct a sutured manifold by removing a tubular neighborhood of $K$ from $Y$ and endowing the new boundary component with a pair of parallel oppositely oriented meridians for $s(\gamma)$. We will denote this sutured manifold by $(Y(K),\gamma(K))$. This has a special boundary component which we shall denote by $\partial_K(Y(K))$ or simply $\partial_K$, namely the boundary of a tubular neighborhood of $K$. Likewise, we shall denote the union of the two meridional sutures on $\partial_K(Y(K))$ by $\mu_K$.

We will be particularly interested in sutured manifolds that satisfy the following definition.

\begin{definition}
A sutured manifold is \emph{balanced} if $\chi(R_+(\gamma))=\chi(R_-(\gamma))$ and every component of $\partial Y$ contains at least one component of $\gamma$.
    \end{definition}

Indeed, to apply certain results from sutured Floer homology theory, we will have to work with sutured manifolds satisfying an even stronger condition:

\begin{definition}
     A sutured manifold $(Y,\gamma)$ is \emph{strongly balanced} if for every component $F$ of $\partial Y$, $${\chi(F\cap R_+(\gamma))=\chi(F\cap R_{-}(\gamma))}.$$
\end{definition}

\begin{remark}\label{rem:stronger}
Given a balanced sutured manifold, $(Y,\gamma)$, one can obtain a strongly balanced sutured manifold by picking an appropriate collection of points on the $\gamma$ and gluing \emph{product $1$-handles} to neighborhoods of pairs of points. See~\cite[Figure 8]{kronheimer2010knots}. We will use this technique in the proof of Theorem~\ref{thm:main}.
\end{remark}

\begin{definition}[{\cite[Definition 2.10]{gabai1983foliations}}]
    A sutured manifolds is \emph{taut} if $(Y,\gamma)$ is \emph{taut} if:\begin{enumerate}
        \item $Y$ is irreducible,
        \item $R_\pm(\gamma)$ are incompressible,
        \item $R_\pm(\gamma)$ are Thurston norm minimizing.
    \end{enumerate}
\end{definition}

\begin{definition}
    Let $(Y,\gamma)$ be a balanced sutured manifold. If $\Sigma$ is a connected properly embedded surface in $(Y,\gamma)$ define the \emph{sutured Thurston norm of $\Sigma$} by \begin{equation*}
        x(\Sigma):=\max\Bigg\{\frac{|\partial \Sigma\cap s(\gamma)|}{2}-\chi(\Sigma)),0\Bigg\}.
    \end{equation*}

    Extend this to disconnected surfaces linearly. For a homology class $\alpha\in H_2(M,\partial M)$ define the \emph{sutured Thurston norm} of $\alpha$ by \begin{equation*}
        x(\alpha):=\min\{x(\Sigma):\Sigma\text{ is properly embedded in }Y\text{ and }[\Sigma]=\alpha\}.
    \end{equation*}
\end{definition}

The only result concerning sutured manifolds that we will use is our proof of Theorem~\ref{thm:main} is a slight reinterpretation of a result of Li-Xie-Zhang~\cite[Theorem 2.9]{li2022floer}, which is, in turn, an upgrade of a result of Scharlemann~\cite[Theorem 4.19]{scharlemann1989sutured} to a setting more amenable to results in sutured Floer homology.

We begin with a lemma in singular homology. When using singular homology, we will take coefficients in $\Z$ unless otherwise specified.

\begin{lemma}\label{lem:sinhom}
    Let $c$ be a non-trivial class in $H_{1}(Y)$ where $Y$ is a $3$-manifold. If $c\cap\alpha\neq 0$ for all non-trivial $\alpha\in H_2(Y,\partial Y)$ then $\rank(H_2(Y,\partial Y))=1$. In particular, if $Y$ contains a toroidal boundary component then $\partial Y$ consists of that torus together with a collection of spheres.

\end{lemma}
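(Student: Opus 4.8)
=== PROOF PROPOSAL ===

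The plan is to analyze the cap product pairing $H^1(Y) \times H_2(Y,\partial Y) \to H_0(Y)$ via Poincar\'e--Lefschetz duality and reduce the rank statement to a nondegeneracy argument. First I would invoke Poincar\'e--Lefschetz duality, which gives an isomorphism $H^1(Y) \cong H_2(Y,\partial Y)$ (for an oriented compact $3$-manifold with boundary), under which the cap product $c \cap (-)$ corresponds to the intersection pairing on $H_2(Y,\partial Y)$, or equivalently to the evaluation pairing between $H^1(Y)$ and $H_1(Y)$ after dualizing. The hypothesis says that the class $c$, viewed as a functional $\alpha \mapsto c \cap \alpha$, has trivial kernel on $H_2(Y,\partial Y)$. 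Since $c \cap \alpha \in H_0(Y) \cong \Z^{(\#\text{components})}$ and we may work with the free parts, a single class $c$ pairing nondegenerately with an entire free abelian group forces that group to have rank at most $1$: a nonzero linear functional valued in $\Z$ (or rather, the map $\alpha \mapsto c\cap\alpha$ landing in $H_0$) can only be injective on a group of rank $1$, since its image is generated by the single class $c$ and injectivity plus the rank of the image bound the rank of the source.

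More carefully, the second step is to pin down the codomain. After composing with augmentation or projecting to a single connected component, $c \cap (-)$ becomes a homomorphism $H_2(Y,\partial Y) \to \Z$. A homomorphism from a finitely generated abelian group to $\Z$ that is injective on the free part can only exist if the free rank of the source is at most $1$. The hypothesis $c \cap \alpha \neq 0$ for all nontrivial $\alpha$ upgrades ``injective on the free part'' to genuine injectivity (no torsion either), but the rank conclusion only needs the free part. This yields $\rank(H_2(Y,\partial Y)) \le 1$; since $c$ is a nontrivial class and duality makes $H_2(Y,\partial Y)$ nontrivial, the rank is exactly $1$.

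For the final sentence about boundary components, I would use the standard half-lives-half-dies statement: the map $H_1(\partial Y;\Q) \to H_1(Y;\Q)$ has image of dimension exactly $\tfrac{1}{2}\dim H_1(\partial Y;\Q)$, and dually $\dim H_2(Y,\partial Y;\Q) = \rank$ controls $\tfrac{1}{2} b_1(\partial Y)$. Each torus contributes $2$ to $b_1(\partial Y)$ and each sphere contributes $0$. With $\rank H_2(Y,\partial Y) = 1$, the half-lives-half-dies count forces $b_1(\partial Y) = 2$, so exactly one toroidal component can appear and all remaining boundary components must be spheres (the only closed surfaces with $b_1 = 0$).

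I expect the main obstacle to be getting the bookkeeping with $H_0$ and disconnectedness exactly right: when $Y$ is disconnected or when one passes between the cap product valued in $H_0(Y)$ and a $\Z$-valued functional, one must be careful that ``nontrivial $\alpha$'' and nondegeneracy are interpreted componentwise, and that the duality isomorphism is applied on each component. The cleanest route is likely to reduce immediately to $Y$ connected (since $c$ is supported and nontrivial, and $H_2(Y,\partial Y)$ and the pairing both split over components), after which the rank-$1$ argument and the half-lives-half-dies count are routine.
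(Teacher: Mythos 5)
Your proposal is correct and follows essentially the same route as the paper: the injectivity of $\alpha\mapsto c\cap\alpha$ into $\Z$ forcing $\rank(H_2(Y,\partial Y))\le 1$ is the abstract form of the paper's linear-combination argument, the lower bound comes from duality and the nontriviality of $c$ in both, and the boundary statement is handled via half-lives-half-dies in both. The only nitpick is that half-lives-half-dies gives $b_1(\partial Y)\le 2$ rather than $=2$; the presence of a torus component is what then pins it at $2$ and forces the remaining components to be spheres.
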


\begin{proof}
  Let $c$, $Y$ be as in the statement of the lemma. Suppose towards a contradiction that ${\rank(H_2(Y,\partial Y))\geq 1}$. Let $\alpha,\beta$ be linearly independent classes in $H_2(Y,\partial Y)$. Then $\alpha\cap c=k$, $\beta\cap c=l$ with $k,l\neq 0$. Then $(l\alpha-k\beta)\cap \gamma=0$ so that $l\alpha-k\beta=0$ by assumption, contradicting the fact that $\alpha$ and $\beta$ are linearly independent. Thus, $\rank(H_2(Y,\partial Y))\leq 1$. Now observe that $c\cap\PD(c)\neq 0$, so that $\PD(c)\neq 0$, thus $\rank(H_2(Y,\partial Y))=1$ as desired.

For the final claim in the statement of the Lemma, observe that the universal coefficient theorem for homology and the half-lives half-dies principle implies that: $$\rank(H_1(Y;\Z))=\rank(H_1(Y;\Q))\geq \dfrac{1}{2}\rank(H_1(\partial Y;\Q)).$$ By Poincar\'e duality and the universal coefficient theorem, $\rank(H_2(Y,\partial Y))=\rank(H^1(Y))=\rank (H_1(Y))$. If $\partial Y$ has a torus component, then the inequality $1=\rank(H_1(Y))\geq \rank(H_1(\partial Y;\Q))$ implies that the remaining boundary components $C$ have $H_1(C;\Q)=0$ --- i.e. they are spheres.
\end{proof}

For the statement of the next theorem, recall that if $\Sigma$ is a \emph{decomposing surface} in a sutured manifold $Y$ subject to certain conditions, there is a procedure called a \emph{sutured manifold decomposition} for obtaining a sutured manifold structure on the $Y\setminus\nu(\Sigma)$, where $\nu(\Sigma)$ is a small neighborhood of $\Sigma$ in $Y$. For details, see~\cite[Definition 3.1]{gabai1983foliations}. If $(Y',\gamma')$ is obtained from $(Y,\gamma)$ by a sutured manifold decomposition along a surface $\Sigma$ then we write $(Y,\gamma)\overset{\Sigma}{\rightsquigarrow}(Y',\gamma')$.

A decomposing surface $\Sigma$ in a sutured manifold $(Y,\gamma)$ is called \emph{taut} if the sutured manifold obtained by decomposing $(Y,\gamma)$ along $\Sigma$ is taut. Examples of taut decomposing surfaces include the co-cores of the $1$-handles discussed in Remark~\ref{rem:stronger}, provided that the initial sutured manifold is taut.

\begin{theorem}\label{thm:hierarchy}
 Let $(Y,\gamma)$ be a balanced sutured manifold, $K$ be a non-trivial knot in $Y$. If $(Y(K),\gamma(K))$ is taut then either $(Y(K),\gamma(K))$ contains a product disk or there is a sequence of sutured manifold decompositions:
\begin{equation*}    
   (Y(K),\gamma(K)) \overset{\Sigma_1}\rightsquigarrow (Y_1(K),\gamma_1(K)) \overset{\Sigma_1}\rightsquigarrow\dots\overset{\Sigma_{n-1}}\rightsquigarrow(Y_{n-1}(K),\gamma_{n-1}(K))\overset{\Sigma_{n}}{\rightsquigarrow} (Y_n,\gamma_n)
\end{equation*}

\noindent such that:
\begin{enumerate}
\item Each $\Sigma_i$ is either a product disk or a surface representing any chosen class $\alpha\in H_2(Y,\partial Y)$ with $\partial_*(\alpha)\neq 0$ in $H_1(\partial Y)$.
\item For $i\neq n$ we have that $\Sigma_i$ does not intersect $\partial_KY(K)$.
\item $\Sigma_n\cap\partial_KY(K)\neq\emptyset$ but $\Sigma_n\cap\mu_K=\emptyset$.
\item $(Y_i(K),\gamma_i(K))$ is taut for all $i$, as is $(Y_n,\gamma_n)$.
\item\label{pnt:boundaryproperties} For every component $V$ of $R(\gamma_i(K))$, $\Sigma_i\cap V$ consists of parallel oriented boundary coherent simple closed curves (c.f. ~\cite[Definition 1.2]{juhasz2008floer}).

\end{enumerate}

\end{theorem}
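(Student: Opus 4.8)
The plan is to deduce the theorem from the taut sutured hierarchy of Li--Xie--Zhang~\cite[Theorem 2.9]{li2022floer}, itself a refinement of Scharlemann's taut-decomposition theorem~\cite[Theorem 4.19]{scharlemann1989sutured}; once that hierarchy is in hand, most of conditions (1)--(5) are bookkeeping. Recall that Gabai--Scharlemann theory produces, for a taut balanced sutured manifold, a sequence of taut decompositions terminating in a product sutured manifold with the decomposing surfaces well-groomed, and that Li--Xie--Zhang strengthen well-groomedness to Juh\'asz's boundary-coherence~\cite[Definition 1.2]{juhasz2008floer} so that the surface-decomposition formula for $\SFH$ applies at every stage. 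Tautness of each intermediate piece is condition (4), and boundary-coherence of the curves $\Sigma_i\cap V$ is condition (5), so both are immediate from their statement.

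Next I would dispose of the dichotomy and realize the prescribed class. If $(Y(K),\gamma(K))$ admits a product disk we are in the first alternative, so assume it does not; note that the later surfaces $\Sigma_i$ may nonetheless be product disks created by earlier decompositions. Scharlemann's theorem permits initiating the hierarchy along any taut surface carrying a given class with nontrivial boundary, and since $\partial_*\alpha\neq 0$ the class $\alpha$ persists in the sutured complement with nonempty boundary; I can therefore arrange the hierarchy so that every decomposing surface is either a product disk or a representative of the single chosen class $\alpha$, which is condition (1).

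The geometric content is conditions (2) and (3). The classes $\alpha$ live in $H_2(Y,\partial Y)$ --- the homology of the \emph{filled} manifold --- so the governing bookkeeping is the algebraic intersection $\alpha\cdot[K]$: a representative can be pushed off the core $K$, and hence off $\partial_K Y(K)$, exactly when this intersection vanishes, and this is what allows every essential decomposition except the last to avoid the knot boundary (condition (2)). Because the hierarchy must ultimately absorb the homology carried by the toroidal component $\partial_K$, at least one surface must meet it; I would front-load all the $\partial_K$-avoiding and product-disk decompositions so that this surface is the terminal $\Sigma_n$. To secure $\Sigma_n\cap\mu_K=\emptyset$ I would use that $\mu_K$ bounds meridian disks and that the boundary-coherence grooming permits isotoping $\Sigma_n\cap\partial_K$ off the sutures $s(\gamma)$ on that component.

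The hard part will be carrying out this reordering while preserving tautness and boundary-coherence at every step: the order of decompositions in a Gabai hierarchy is not free, and maintaining well-groomedness --- hence tautness of each $(Y_i(K),\gamma_i(K))$ and of $(Y_n,\gamma_n)$ --- while simultaneously prescribing the class of the essential surfaces and forcing the unique $\partial_K$-meeting surface to the end is exactly the delicate point. This is precisely the work carried out in~\cite[Theorem 2.9]{li2022floer}, so in practice the obstacle reduces to matching their conclusion term-by-term with (1)--(5): in particular reconciling their normalization of the coherent boundary curves on $R(\gamma_i(K))$ and their treatment of the distinguished boundary component with the roles played here by $\partial_K$ and $\mu_K$.
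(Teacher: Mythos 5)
Your overall strategy -- follow Scharlemann's hierarchy construction with the Li--Xie--Zhang modifications that guarantee boundary-coherence, and treat (4)--(5) as coming for free from their statement -- is the same route the paper takes, and the paper likewise only sketches the details by reference to \cite[Theorem 4.19]{scharlemann1989sutured} and \cite[Theorem 2.9]{li2022floer}. However, there is a genuine gap at the one point where the paper does supply an actual argument: termination. You assert that ``the hierarchy must ultimately absorb the homology carried by the toroidal component $\partial_K$, at least one surface must meet it,'' but this is not an argument. The danger is that the greedy process -- decompose along product disks, then along taut representatives of classes $\alpha$ with $\alpha\cap[\mu_K]=0$ and $\partial_*\alpha\neq 0$ -- simply runs out of admissible classes before any surface is forced to touch $\partial_K$. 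The paper excludes this using Lemma~\ref{lem:sinhom}: if at some stage every nonzero class in $H_2(Y_i,\partial Y_i)$ pairs nontrivially with $[\mu_K]$, then $\rank H_2(Y_i,\partial Y_i)=1$ and the boundary of the component containing $\partial_K$ consists of that torus together with spheres; since a sutured manifold must have nonempty suture-bearing boundary there, a spherical boundary component exists, which yields a reducing sphere and contradicts tautness of $(Y(K),\gamma(K))$. Without this homological step your proof does not establish condition (3).

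A secondary issue is your ``front-load and reorder'' framing. The paper does not reorder an existing hierarchy; it builds the hierarchy directly so that the $\partial_K$-meeting surface can only appear as the terminal decomposition, by always preferring product disks (none of which can have boundary on $\partial_K$ since $K$ is non-trivial) and then classes with $\alpha\cap[\mu_K]=0$, each step strictly decreasing Scharlemann's complexity. Reordering decompositions is genuinely ill-posed -- a later decomposing surface lives in a manifold that only exists after the earlier cuts -- so you cannot defer this to \cite{li2022floer} as mere bookkeeping; the direct construction avoids the problem entirely. Finally, note that the relevant pairing throughout is $\alpha\cap[\mu_K]$ in the knot exterior, not $\alpha\cdot[K]$ in the filled manifold; the two conditions play different roles and the paper works exclusively with the former.
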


Note that we are abusing notation here by allowing $K$ to refer to knots in different sutured manifolds. 
   We follow the proof of~\cite[Theorem 4.19]{scharlemann1989sutured}, making the adaptions outlined in the proof of~\cite[Theorem 2.9]{li2022floer}, to ensure that the relevant surfaces and sutured manifolds satisfy the conditions necessary to apply Juh\'asz' sutured Floer homology decomposition formula~\cite[Theorem 1.3]{juhasz2008floer} at each stage of the sutured manifold hierarchy. Since the proof is essentially that given for~\cite[Theorem 2.9]{li2022floer} --- or indeed for~\cite[Theorem 4.19]{scharlemann1989sutured} --- we only provide a sketch.
\begin{proof}[Proof Sketch]

Suppose $(Y,\gamma)$ and $K$ are as in the statement of the theorem. We construct the surfaces $\Sigma_i$ iteratively, at each stage reducing the \emph{complexity}, $C(Y,\gamma)$, or \emph{reduced complexity}, $\widehat{C}(Y,\gamma)$, of $(Y_i,\gamma_i)$. See~\cite[Definition 4.11, Definition 4.12]{scharlemann1989sutured} for the definitions of these two terms. 

The first step in Scharlemann's proof of~\cite[Theorem 4.19]{scharlemann1989sutured} is to decompose along index $0$ disks in $(Y,\gamma)$. See~\cite[Definition 4.6]{scharlemann1989sutured} for a definition of ``index" in this context. There are three types of index zero disks, see the remark after~\cite[Definition 4.6]{scharlemann1989sutured}. Since we are working with a sutured manifold rather than a sutured manifold with a properly imbedded $1$-complex, canceling and amalgamating disks cannot occur (See~\cite[Definition 4.1]{scharlemann1989sutured}). Thus we only have product disks.

Suppose that there are product disks embedded in $(Y(K),\gamma(K))$. Then we can decompose along product disks until we obtain a sutured manifold $(Y_i(K),\gamma_i(K))$ with no more product disks, since each product disk decomposition reduces the complexity. Note that none of these product disks can have boundary contained in $\partial_KY(K)$, since we are assuming $K$ is non-trivial.

 Having obtained a sutured manifold without product disks, $(Y_i(K),\gamma_i(K))$, we proceed as follows. If there exists a class $\alpha\in H_2(Y_i(K),\gamma_i(K))$ such that $\alpha\cap[\mu_K]=0$ and $\partial_*(\alpha)\neq 0$ in $H_1(\partial Y)$ then we can pick a properly embedded surface $\Sigma_i$ in $(Y_i,\gamma_i)$ with $[\Sigma_i]=\alpha$ such that the sutured manifold obtained by decomposing $(Y_i,\gamma_i)$ along $\Sigma_i$ is taut. The conditions on $\Sigma_i$ described in~\ref{pnt:boundaryproperties} can be achieved by modifying $\Sigma_i$ near its boundary as described in the proof of~\cite[Theorem 2.9]{li2022floer}.

We apply the decompositions described above recursively. At each stage, the complexity of $(Y_i,\gamma_i)$ strictly decreases. Since the complexity is bounded below, this process must terminate. There are two ways in which this can occur; either some class $\alpha\in H_2(Y,\partial Y)$ with $\alpha\cap[\mu_K]=0$ and and $\partial_*(\alpha)\neq 0$ is represented by a surface $\Sigma_\alpha$ with $\partial \Sigma_\alpha\cap\partial_KY_K\neq\emptyset$ --- so that we have the desired result --- or at some stage there is no such class $\alpha\in H_2(Y,\partial Y)$ with $\alpha\cap[\mu_K]=0$ and $\partial_*(\alpha)\neq \emptyset$.

It thus remains only to exclude the second case. To do so, observe that by Lemma~\ref{lem:sinhom} the remaining boundary components of the component of $(Y_i,\gamma_i)$ containing $\partial_KY_K$ are all spheres. Observe that there must be at least one sphere, since we are assuming that $(Y,\gamma)$ is a sutured manifold, so that in turn each $(Y_i,\gamma_i)$ must have non-empty boundary. However, we then have that $(Y_i,\gamma_i)$ contains at least one reducing sphere, so that $(Y(K),\gamma(K))$ is not taut, a contradiction.
\end{proof}

\subsection{Sutured Floer homology}\label{subsec:SFH} Sutured Floer homology is an an invariant of balanced sutured manifolds due to Juh\'asz~\cite{juhasz2006holomorphic} that simultaneously generalizes Heegaard Floer homology and knot Floer homology. In this subsection we briefly review some of the properties of sutured Floer homology that we will use in the rest of the paper, and prove some preparatory lemmas. We refer the reader to~\cite{juhasz2006holomorphic,juhasz2010sutured} for further background on sutured Floer homology.

 While sutured Floer homology can be defined with integer coefficients, we take coefficients in $\Z/2$ in this paper for the sake of simplicity. Sutured Floer homology satisfies a K\"unneth formula for connect sums. Let $(Y_i,\gamma_i)$ be connected product sutured manifolds for $1\leq i\leq n$ with $n\geq 1$ and $M_i$ be $3$-manifolds with $1\leq i\leq m$, perhaps with $m=0$. Then~\cite[Proposition 9.15]{juhasz2006holomorphic} implies that:

\begin{equation}\label{eq:Kunneth}
    \SFH\Big(\big(\underset{1\leq i\leq m}{\#}M_i\big)\#\big(\underset{1\leq i\leq n}{\#}(Y_i,\gamma_i)\big)\Big)\cong \Big(\underset{1\leq i\leq m}{\bigotimes}\widehat{\HF}(M_i)\Big)\otimes \Big(\underset{1\leq i\leq n}{\bigotimes}\SFH(Y_i,\gamma_i)\Big)\otimes V^{n-1}.
\end{equation}
Here, and for the remainder of the paper, $V$ is a rank two vector space. The sutured Floer homology of a sutured manifold $(Y,\gamma)$ can be endowed with a grading by relative $\spin^c$ structures on $Y$ or a relative grading by $H^2(Y,\partial Y)\cong H_1(Y)$ grading. The collection of relative $\spin^c$ structures is in affine correspondence with $H^2(Y,\partial Y)$. A choice of trivialization, $t$, of the plane field orthogonal of the $\spin^c$ structure restricted to $\partial Y$ and an element $\alpha\in H_2(Y,\partial Y)$ allows one to extract a $\Q$-valued grading via the map from ${A_{\alpha,t}:\spin^c(Y,\gamma)\to\Q}$ given by $\s\mapsto \langle c_1(\s,t),\alpha\rangle$. The fact that appropriate trivializations exist for strongly balanced sutured manifolds was proven by Juh\'asz~\cite[Propositon 3.4]{juhasz2008floer}. For the boundary components $\partial_K$ of $Y(K)$ we will always use the trivialization specified by the vector field pointing in the meridional direction. We will typically suppress the choice of trivialization in our notation because it does not play a significant role in this paper. While the $A_{\alpha}$ grading is $\Q$ valued apriori, note that if $\s$ and $\s'$ are two $\spin^c$ structures, then $A_\alpha(\s)-A_\alpha(\s')\in 2\Z$, since $c_1(\s,t)-c_1(\s',t)=2(\s-\s')\in H^2(Y,\partial Y)$. In the special case of a knot exterior, this grading is exactly twice the Alexander grading. Observe also that a graded version of the K\"unneth theorem --- Equation~(\ref{eq:Kunneth}) holds --- this follows from the fact that the $H_2(X\#Y,\partial(X\#Y))\cong H_2(X,\partial X)\oplus H_2(Y,\partial Y)$ for $X$ and $Y$ $3$-manifolds and $H^1(X\#Y)\cong H^1(X,\partial X)\oplus H^1(Y,\partial Y)$.

If $K$ is a knot in a sutured manifold $(Y,\gamma)$ then there is a filling map $${G_K:\spin^c(Y(K),\gamma(K))\to\spin^c(Y,\gamma)},$$ This is obtained by gluing $\nu(K)$ to $Y(K)$, where $\nu(K)$ is equipped with meridional sutures, $(\nu(K),\mu_K)$ --- i.e. the exterior of the core of $0$ surgery on the unknot equipped with meridional sutures --- endowed with a non-vanishing vector field with the homology type of the torsion relative $\spin^c$-structure on $(D^2\times S^1,\mu)$. See Figure~\ref{fig:solidtorusvfields} and~\cite[Section 3.7]{HolomorphicdiskslinkinvariantsandthemultivariableAlexanderpolynomial} for a version of this map in a slightly different context. From a trivialization $t$ for $(Y(K),\gamma(K))$, one obtains a trivialization $\overline{t}$ for $(Y,\gamma)$ simply by forgetting the component of the trivialization on $\partial_K(Y(K))$. Indeed, $t$ can be extended over $\nu(K)$ with vanishing set $K$, so given a generic zero section of the plane field orthogonal to $\s\in \spin^c(Y(K),\gamma(K))$ that agrees with $t$ on $\partial_K$, one can obtain a zero section of the two plane field orthogonal to $G_K(\s)\in\spin^c(Y,\gamma)$ by adding $K$.  Finally, there is a natural map $H_K: H_2(Y,\partial Y)\to H_2(Y(K),\partial(Y(K)))$ which takes $[\Sigma]$ where $\Sigma$ is a properly embedded surface in $Y$ that meets $K$ transversely, to $[\Sigma\setminus\nu(K)]$. The next lemma is a modest generalization of~\cite[Lemma 3.13]{HolomorphicdiskslinkinvariantsandthemultivariableAlexanderpolynomial}, and implies the rank bound~\ref{eq:rankbound2} stated in the introduction. Its proof is partially based on the proofs of Proposition 7.1 and Lemma 3.13 in~\cite{HolomorphicdiskslinkinvariantsandthemultivariableAlexanderpolynomial}.

\begin{figure}
    \centering
     \begin{tikzpicture}
        \draw[thick] (0,0) circle (3);

        \draw[thick, blue] (0,0) circle (1);

         \draw[thick, blue, ->] (1,0) -- ++(0,-0.01);
        
        \draw[thick, blue, ->] (-1,0) -- ++(0,-0.01);

        \draw[thick, blue,->] (-2.5,1.65) to[out=-90, in=90] (-2.75,0);
      \draw[thick, blue] (-2.75,0)  to[out=-90, in=90] (-2.5,-1.65);
      
        \draw[thick, blue,->] (-1.5,2.6) to[out=-90, in=90] (-2,0);
        \draw[thick, blue](-2,0) to[out=-90, in=90] (-1.5,-2.6);
        
        \draw[thick, blue,->] (0,3) to (0,2);
         \draw[thick, blue] (0,3) to (0,1);
         \draw[thick, blue,->] (0,-1) to (0,-2);
           \draw[thick, blue] (0,-1) to (0,-3);
      \draw[thick, blue,->] (2.5,1.65) to[out=-90, in=90] (2.75,0);
      \draw[thick, blue] (2.5,1.65) to[out=-90, in=90] (2.75,0) to[out=-90, in=90] (2.5,-1.65);
        \draw[thick, blue] (1.5,2.6) to[out=-90, in=90] (2,0) to[out=-90, in=90] (1.5,-2.6);
        \draw[thick, blue,->] (1.5,2.6) to[out=-90, in=90] (2,0);
    \end{tikzpicture}
    \caption{The torsion relative $\spin^c$-structure on $(D^2\times S^1,\mu)$ can be constructed from the vector field $v$, shown in blue, on the annulus. Here the core of $D^2\times S^1$, $K$, is the inner boundary component of the annulus and oriented counterclockwise. The section of $v^\perp$, restricted to the annulus, points perpendicularly out of the page and decreases in magnitude to $0$ as you move from the outer boundary to the inner boundary. The vector field $v$ on the solid torus can be recovered by rotating the figure about the inner boundary component and modifying the resulting vector field in a neighborhood of the right hand side of the inner boundary component. The section of $v^\perp$ can be defined similarly.}
    \label{fig:solidtorusvfields}
\end{figure}
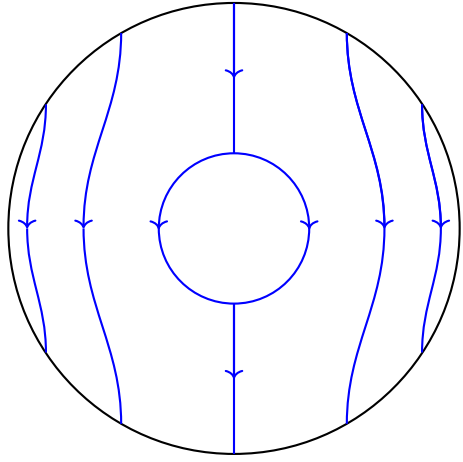

\begin{lemma}\label{lem:spectral}
    Suppose $K$ is a knot in a stongly balanced sutured manifold $(Y,\gamma)$ such that $[\mu_K]$ is non-torsion in $ H_1(Y(K);\Z)$ and let $\alpha$ be a class in  $H_2(Y,\partial Y)$. View $\SFH(Y,\gamma)$ as an $A_\alpha$ graded vector space and $\SFH(Y(K),\gamma(K))$ as a $A_{H_K(\alpha)}$-graded vector space. There is a spectral sequence from $\SFH(Y(K),\gamma(K))$ to $\SFH(Y,\gamma)\otimes V$. Moreover 
  for any $C\in\Z$;
    \begin{align}\rank(\SFH(Y(K),\gamma(K),A_{[H_K(\alpha)]}=C))\geq2\rank(\SFH(Y,\gamma),A_{[\alpha]}=C+ \langle \PD([K]),\alpha\rangle)).
    \end{align}

\end{lemma}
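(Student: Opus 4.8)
The plan is to carry the argument of~\cite[Lemma 3.13, Proposition 7.1]{HolomorphicdiskslinkinvariantsandthemultivariableAlexanderpolynomial} over to the sutured setting, producing the spectral sequence from a single filtered chain complex and then reading off the graded rank bound. Since $(Y,\gamma)$ is strongly balanced, Juh\'asz' trivialization result~\cite[Proposition 3.4]{juhasz2008floer} makes both gradings $A_\alpha$ and $A_{H_K(\alpha)}$ available, and the hypothesis that $[\mu_K]$ is non-torsion will guarantee that the filtration I construct is non-degenerate. First I would fix a sutured Heegaard diagram for $(Y(K),\gamma(K))$ in which the two components of $\mu_K$ appear as a pair of basepoints $w,z$, in the spirit of Juh\'asz' identification $\SFH(Y(K),\mu_K)\cong\widehat{\HFK}$. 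The same set of intersection points then generates two complexes: counting index-one disks with $n_w=n_z=0$ computes $\SFH(Y(K),\gamma(K))$, while counting disks with only $n_w=0$ computes the sutured Floer homology of the filled manifold taken with the now-free basepoint $z$, namely $\SFH(Y,\gamma)\otimes V$ (the freed $\mu_K$-basepoint contributing the rank-two factor, as one checks against the connect-sum K\"unneth formula~(\ref{eq:Kunneth}) in the model case $Y=M\#(Y',\gamma)$). The disks with $n_z>0$ filter the second differential and their associated graded recovers the first, so the spectral sequence of this filtered complex runs from $\SFH(Y(K),\gamma(K))$ to $\SFH(Y,\gamma)\otimes V$, giving the first assertion.

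\emph{The grading shift.} The key computation is that, for every $\s\in\spin^c(Y(K),\gamma(K))$, one has $A_\alpha(G_K(\s))=A_{H_K(\alpha)}(\s)+\langle\PD([K]),\alpha\rangle$. This is immediate from the zero-section description recalled just before the lemma: extending a section of the plane field orthogonal to $\s$ over $\nu(K)$ with vanishing locus $K$ exhibits a representative of $c_1(G_K(\s),\overline{t})$ as the vanishing locus of $c_1(\s,t)$ together with $K$, and pairing with a surface representing $\alpha$ contributes exactly the extra intersection number $\langle\PD([K]),\alpha\rangle$. I would then observe that the $z$-filtration is transverse to the $A_{H_K(\alpha)}$ grading: crossing $z$ alters $\s$ by a multiple of $[\mu_K]$, which lies in the kernel of $G_K$, and $\langle\PD([\mu_K]),H_K(\alpha)\rangle=[\mu_K]\cdot H_K(\alpha)=0$ since a meridian can be isotoped off any surface representing $H_K(\alpha)$. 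Hence $A_{H_K(\alpha)}$ is preserved by every differential of the spectral sequence and is constant along the $[\mu_K]$-cosets that form the fibers of $G_K$ (genuine $\Z$-cosets precisely because $[\mu_K]$ is non-torsion).

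\emph{Deducing the bound.} Because $A_{H_K(\alpha)}$ is respected by all pages, ranks can only drop from $E_1=\SFH(Y(K),\gamma(K))$ to $E_\infty=\SFH(Y,\gamma)\otimes V$ within each fixed value of $A_{H_K(\alpha)}$, giving $\rank(\SFH(Y(K),\gamma(K)),A_{H_K(\alpha)}=C)\geq\rank(\SFH(Y,\gamma)\otimes V,A_{H_K(\alpha)}=C)$. Translating the grading on the right through $G_K$ by the displayed identity turns $A_{H_K(\alpha)}=C$ into $A_\alpha=C+\langle\PD([K]),\alpha\rangle$, which is the asserted inequality; summing over all $C$ then recovers the rank bound~(\ref{eq:rankbound2}).

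The step I expect to be the real obstacle is the diagrammatic input of the first paragraph: producing one diagram that computes both invariants on the same generators, and verifying that it is precisely the $\mu_K$-basepoint whose freeing fills in the solid torus and contributes the factor $V$, rather than some other modification of the diagram. Once that is in place, the first-Chern-class identity is the standard zero-section calculus and the passage through the spectral sequence is routine.
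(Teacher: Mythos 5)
Your proposal is correct and follows essentially the same route as the paper's proof: the same filtered complex obtained by freeing the $\mu_K$-basepoint (the paper collapses one $\mu_K$-boundary component of the sutured Heegaard surface to a point $z$ and identifies the unblocked complex with $\SFH(\mathring{Y},\mathring{\gamma})\cong\SFH(Y,\gamma)\otimes V$, which is the same complex as your $n_w=0$ count), the same use of non-negativity of $n_z$ together with $[\mu_K]$ being non-torsion to get the spectral sequence, and the same zero-section computation of $\langle c_1(G_K(\s),\overline{t}),\alpha\rangle-\langle c_1(\s,t),H_K(\alpha)\rangle=\langle \PD([K]),\alpha\rangle$ for the grading shift. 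The only cosmetic difference is that you argue directly that the differentials preserve $A_{H_K(\alpha)}$ via $[\mu_K]\cdot H_K(\alpha)=0$, whereas the paper phrases this as the agreement of the relative $A_{H_K(\alpha)}$- and $A_\alpha$-gradings on the two complexes; these are equivalent.
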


Note that the hypothesis that $[\mu_K]$ is non-torsion in $H_1(Y(K))$ is necessary; the core of $0$ surgery on the unknot in $S^3$ has trivial sutured Floer homology, but $0$-surgery on the unknot in $S^3$ --- i.e. $S^1\times S^2$ --- has non-trivial sutured Floer homology.

\begin{proof}
    For the ungraded statement, consider a Heegaard diagram for $(Y(K),\gamma(K))$ $(\Sigma,\alpha,\beta)$. Let $(\mathring{Y},\mathring{\gamma})$ be the sutured manifold obtained from $(Y,\gamma)$ by removing an embedded $3$-ball and adding a single suture to the corresponding boundary component. A Heegaard diagram for $(\mathring{Y},\mathring{\gamma})$ can be obtained from $(\Sigma,\alpha,\beta)$ by capping off a single boundary component corresponding to a suture on $\partial_K(Y(K))$. Equivalently, $\SFH(Y(K),\gamma(K))$ and $\SFH(\mathring{Y},\mathring{\gamma})$ can each be computed as follows. First, given a Heegaard diagram $(\Sigma,\alpha,\beta)$ for $(Y,\gamma)$ collapse a boundary component of $\Sigma$ corresponding to $\mu_K$ to a point $z$ in a new pointed Heegaard diagram $(\overline{\Sigma},\alpha,\beta,z)$. Observe that to compute $\SFH(Y(K),\gamma(K))$ one can count pseudo-holomorphic disks which have multiplicity zero at $z$. This yields a differential $\partial_{\CF(Y(K),\gamma(K))}$ on $\CF(Y(K),\gamma(K))$ which computes $\SFH(Y(K),\gamma(K))$. To compute $\SFH(\mathring{Y},\mathring{\gamma})$, one can count pseudo-holomorphic disks with arbitrary multiplicity at $z$. This yields a differential $\partial_{\CF(\mathring{Y},\mathring{K})}$ on $\CF(Y(K),\gamma(K))$ which computes $\SFH(\mathring{Y},\mathring{\gamma})$. Consider the decomposition of $\CF(\mathring{Y}(K),\mathring{\gamma}(K))$ as a relatively graded complex over $\langle \PD([\mu_K])\rangle\subset H^2(Y(K),\partial(Y(K)))$.  As in~\cite[Lemma 3.11]{HolomorphicdiskslinkinvariantsandthemultivariableAlexanderpolynomial}, if there is a pseudo-holomorphic disk from $\mathbf{x}$ to $\mathbf{y}\in\CF(Y(K),\gamma(K))$ then the relative  $H_1(Y(K))$ grading of the two generators is given by $n_z(\phi)\cdot[\mu_K]$, where $n_z$ is the multiplicity of the disk at $z$. Since $[\mu_K]$ is non torsion in $H_1(Y(K);\Z)$ it follows from Poincar\'e duality that $\PD([\mu_K])$ is non-torsion in $H^2(Y(K);\Z)$. Therefore, since $n_z$ is non-negative, the differential $\partial_{\CF(\mathring{Y},\mathring{\gamma})}$ is filtered with respect to the relative $\langle \mu_K\rangle$-grading. Consequently, there is a spectral sequence from $\SFH(Y,\gamma)$ to $\SFH(\mathring{Y},\mathring{\gamma})$. But $\SFH(\mathring{Y},\mathring{\gamma})\cong\SFH(Y,\gamma)\otimes V$ by~\cite[Equation 9.1]{juhasz2006holomorphic}, so the ungraded result follows.

      To upgrade to the graded version of this statement, observe that --- again using the Heegaard diagram $(\overline{\Sigma},\alpha,\beta,z)$ to identify generators of  $\CF(\mathring{Y},\mathring{\gamma})$ with generators of  $\CF({Y}(K),\gamma(K))$ --- the relative $A_{\alpha}$-grading on $\CF(\mathring{Y},\mathring{\gamma})$ agrees with the relative $A_{H_K(\alpha)}$-grading on $\CF({Y}(K),\gamma(K))$. Thus, we need to compare $\langle c_1(\s,t),H_K(\alpha)\rangle$ and $\langle c_1(G_K(\s),\overline{t}),\alpha\rangle$. Represent $\alpha$ by a properly embedded surface $\Sigma$ which meets $K$ transversely. Let $\Sigma'$ denote $\Sigma\setminus \nu(K)$ viewed as a properly embedded surface in  $Y(K)$. Observe that if $c_1(\s,t)=\PD[c]$ for some multi-curve $c\subset Y(K)$ then $c_1(G_K(\s),\overline{t})$ is represented by $c\cup K\subset Y$ as discussed prior to this lemma.
    
    Thus 
    \begin{align*}
        \langle( c_1(G_K(\s),\overline{t}),\alpha\rangle&=\cap(c\cup K,\Sigma)\\&=\cap(c,\Sigma)+\cap(K,\Sigma)\\&=\cap(c,\Sigma')+\cap(K,\Sigma)\\&=\langle (c_1(\s),t),H_K(\alpha)\rangle+ \langle \PD([K]),\alpha\rangle.
    \end{align*}

  Therefore, the rank bound descends to the individual gradings as described.\end{proof}

In particular, it follows from Lemma~\ref{lem:spectral} that the $A_{H_K(\alpha)}$ span of $\SFH(Y(K),\gamma(K))$ is at least the $A_{\alpha}$-span of $\SFH(Y,\gamma)$. The proof of Theorem~\ref{thm:main} amounts to determining when these two spans are in fact equal.

We now discuss the relationship between sutured Floer homology and sutured manifold decompositions. First recall that Juh\'asz showed that the sutured Floer homology of a manifold remains unchanged under decomposition along product disks~\cite[Lemma 9.113]{juhasz2006holomorphic}, or equivalently under product $1$-handle attachment. More generally, Juh\'asz showed that if $(Y,\gamma)\overset{\Sigma}{\rightsquigarrow} (Y',\gamma')$ is a sutured manifold condition subject to some conditions, then $\SFH(Y',\gamma')$ is isomorphic to the summand of $\SFH(Y,\gamma)$ supported in the subset of relative $\spin^c$-structures on $(Y,\gamma)$ that are~\emph{outer} with respect to $\Sigma$ ~\cite[Theorem 1.3]{juhasz2008floer}. For a definition of outer, see See~\cite[Definition 1.1]{juhasz2010sutured}. Recall from~\cite[Definition 3.8]{juhasz2008floer}, there are quantities $c(\Sigma,t):=\chi(\Sigma)+I(\Sigma)-r(\Sigma,t)$ one can associate to a properly embedded surface $\Sigma$ in a sutured manifold $(Y,\gamma)$ equipped with a trivialization. If $\Sigma$ is a decomposing surface in $(Y,\gamma)$, then this quantity picks out the relative $\spin^c$ structures which are outer with respect to $\Sigma$ in the sense that $\s$ is outer with respect to $S$ if and only if $\langle c_1(\s,t),[S_i]\rangle=c(S_i,t)$ for every component $S_i$ of $S$~\cite[Lemma 3.10]{juhasz2010sutured}. We also have the following Lemma:
\begin{lemma}\label{lem:maxagrading}
    Suppose $(Y,\gamma)$ is a taut sutured manifold and $\alpha\in H_2(Y,\partial Y)$ is a homology class with a taut representative $S$. The minimum $A_\alpha$-grading in which $\SFH(Y,\gamma)$ is non-trivial is given by the maximum value of $c(\Sigma,t)$ over all decomposing surfaces $\Sigma$ representing $\alpha$ without either disk components whose boundary does not intersect $\gamma$ or spherical components. This maxima is realized by $c(\Sigma',t)$ for any taut representative $\Sigma'$ of $\alpha$.
\end{lemma}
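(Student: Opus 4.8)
The plan is to prove the two inequalities separately, using Juh\'asz's surface decomposition formula \cite[Theorem 1.3]{juhasz2008floer} together with the characterization of outer $\spin^c$ structures via $c(\Sigma,t)$ recalled above \cite[Lemma 3.10]{juhasz2010sutured}. Write $m$ for the minimum value of $A_\alpha$ on the support of $\SFH(Y,\gamma)$; since $(Y,\gamma)$ is taut this support is non-empty by Juh\'asz's theorem that a taut balanced sutured manifold has non-trivial sutured Floer homology~\cite{juhasz2006holomorphic}, so $m$ is well defined. The key observation is that if $\s$ is outer with respect to a decomposing surface $\Sigma$ representing $\alpha$ then, summing the defining equality $\langle c_1(\s,t),[\Sigma_i]\rangle=c(\Sigma_i,t)$ over the components of $\Sigma$, one gets $A_\alpha(\s)=\langle c_1(\s,t),\alpha\rangle=c(\Sigma,t)$. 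Thus the content of the lemma is that (i) $c(\Sigma,t)\leq m$ for every admissible $\Sigma$ representing $\alpha$, and (ii) this bound is attained, with equality exactly for taut representatives.

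For the realization statement (ii), I would decompose along a taut representative. Given the taut representative $S$ of $\alpha$, after the boundary modifications made in the proof of Theorem~\ref{thm:hierarchy} (cf.\ condition~\ref{pnt:boundaryproperties}, which make $\partial S$ boundary-coherent while changing neither $[S]$ nor $\chi(S)$ nor tautness) one obtains a well-groomed decomposing surface $\Sigma'$ to which \cite[Theorem 1.3]{juhasz2008floer} applies, and $(Y,\gamma)\overset{\Sigma'}{\rightsquigarrow}(Y',\gamma')$ with $(Y',\gamma')$ taut. Then $\SFH(Y',\gamma')\cong\bigoplus_{\s\text{ outer}}\SFH(Y,\gamma,\s)$, and the left-hand side is non-zero because $(Y',\gamma')$ is taut. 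Hence some outer $\s$ lies in the support, and for it $A_\alpha(\s)=c(\Sigma',t)$; in particular $c(\Sigma',t)$ is a value of $A_\alpha$ on the support, so $c(\Sigma',t)\geq m$. Finally, because $c(\Sigma,t)=\chi(\Sigma)+I(\Sigma)-r(\Sigma,t)$ is governed by $\chi(\Sigma)$ up to boundary terms that are fixed once $\partial\Sigma$ is made boundary-coherent, its maximum over admissible surfaces representing $\alpha$ is realized precisely by the Thurston-norm-minimizing (i.e.\ taut) representatives. This is also the reason spherical components and $\gamma$-disjoint disk components must be excluded: being null-homologous in $\alpha$ but contributing positively to $\chi$, they would artificially inflate $c(\Sigma,t)$ and make the maximum ill-posed. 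Consequently any two taut representatives give the same value $c(\Sigma',t)=\max_\Sigma c(\Sigma,t)$.

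The remaining and main point is inequality (i): for every admissible $\Sigma$ representing $\alpha$ and every $\s$ with $\SFH(Y,\gamma,\s)\neq 0$ one has $A_\alpha(\s)\geq c(\Sigma,t)$. This is a genuine adjunction-type bound, since the function $\s\mapsto\langle c_1(\s,t),\alpha\rangle$ is unbounded on the full affine space of relative $\spin^c$ structures, so the inequality can hold only on the support of $\SFH$ and must use Floer-theoretic input. The approach I would take is Juh\'asz's polytope argument \cite{juhasz2010sutured}: every supported $\s$ is outer with respect to some well-groomed surface $\Sigma_\s$ homologous to $\Sigma$, obtained by modifying $\Sigma$ near its boundary to realize the prescribed evaluation $\langle c_1(\s,t),\alpha\rangle$, whence $A_\alpha(\s)=c(\Sigma_\s,t)$, and one then compares $c(\Sigma_\s,t)$ with $c(\Sigma,t)$ using that $c$ is maximized on taut representatives. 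Equivalently, one shows the support of $\SFH(Y,\gamma)$ lies in the half-space $\{\s:\langle c_1(\s,t),\alpha\rangle\geq c(\Sigma,t)\}$, with the outer $\spin^c$ structures forming the bounding face; with the orientation conventions fixed above (so that decomposing along a surface representing $\alpha$ detects the \emph{minimal} $A_\alpha$-grading) this says exactly that the minimum of $A_\alpha$ on the support equals $\max_\Sigma c(\Sigma,t)$.

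I expect the main obstacle to be establishing (i) cleanly: the bookkeeping needed to realize an arbitrary supported $\s$ as outer with respect to a well-groomed representative of $\alpha$, and keeping track of the orientation and trivialization conventions so that the decomposition along $\Sigma$ picks out the minimal rather than the maximal $A_\alpha$-grading. Both are standard within Juh\'asz's framework, so in the write-up I would isolate (i) as the single place where the full strength of the polytope machinery of \cite{juhasz2010sutured} is invoked, and deduce (ii) directly from the decomposition formula and tautness detection.
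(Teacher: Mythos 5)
Your realization step is the same as the paper's: decompose along a taut representative, invoke the decomposition formula \cite[Theorem 1.3]{juhasz2008floer} and the fact that taut manifolds have non-trivial $\SFH$, and conclude that the grading $c(\Sigma',t)$ lies in the support. The gap is in your step (i), which you yourself flag as the main obstacle and never actually carry out. The route you sketch --- realize every supported $\s$ as outer with respect to some well-groomed surface $\Sigma_\s$ homologous to $\Sigma$, then compare $c(\Sigma_\s,t)$ with $c(\Sigma,t)$ ``using that $c$ is maximized on taut representatives'' --- does not work as stated. First, the comparison runs the wrong way: if $c$ is maximized at taut representatives then $c(\Sigma_\s,t)\leq c$, which would place the support in the half-space $A_\alpha(\s)\leq c$, i.e.\ it addresses the \emph{maximum} of $A_\alpha$ on the support, not the minimum the lemma is about. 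Second, the statement ``every supported $\s$ is outer for some surface in the class $\alpha$'' is vacuous for this purpose: every grading of the correct parity that is at most $c$ is outer with respect to \emph{some} representative of $\alpha$ (a suitable stabilization), so being outer carries no information about non-triviality; what matters is whether the associated decomposition is taut.

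The paper's argument for (i) is a stabilization trick that you are missing. Let $S$ be a taut representative with $c=c(S,t)$ and let $S^g$ be its $g$-fold stabilization. Then $c(S^g,t)=c-2g$, so as $g$ ranges over positive integers the outer gradings of the $S^g$ sweep out every grading of the correct parity strictly below $c$ (recall $A_\alpha$ takes values in a single coset of $2\Z$ on the support). Decomposing along $S^g$ yields a manifold that is not a connect sum of taut pieces, hence has vanishing $\SFH$ by \cite[Proposition 9.18]{juhasz2006holomorphic}; the decomposition formula then forces $\SFH(Y,\gamma)$ to vanish in grading $c-2g$ for every $g\geq 1$. Combined with your step (ii) this identifies $c$ as the minimum supported grading. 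The same trick disposes of the remaining claim that every admissible $\Sigma$ has $c(\Sigma,t)\leq c$ --- if $c(\Sigma,t)>c$, stabilize $\Sigma$ until its $c$-value equals $c$ and derive a contradiction with the non-triviality established in (ii) --- whereas your purely topological justification (``$c$ is governed by $\chi$ up to boundary terms that are fixed'') is unsubstantiated, since distinct representatives of $\alpha$ can have different boundaries and hence different $I$ and $r$ contributions. No polytope machinery from \cite{juhasz2010sutured} beyond \cite[Lemma 3.10]{juhasz2008floer} is needed.
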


\begin{proof}
    Let $m$ denote the minimal $A_\alpha$ grading in which $\SFH(Y,\gamma)\neq 0$ and $c$ denote the maximal value of $c(\Sigma,t)$ over all taut decomposing surfaces $\Sigma$ representing $\alpha$. Let $S$ be a taut representative of $\alpha$. By definition, decomposing $(Y,\gamma)$ along $S$ results in a taut sutured manifold, which has non-trivial sutured Floer homology by~\cite[Theorem 1.4]{juhasz2008floer}. It follows from~\cite[Theorem 1.3]{juhasz2008floer} that $\SFH(Y,\gamma)$ is non-trivial in the relative $\spin^c$ structures that are outer with respect to $S$. In turn, ~\cite[Lemma 3.10]{juhasz2008floer} implies that $\SFH(Y,\gamma)$ is non-trivial in $A_\alpha$-grading $c$, so that $c\geq m$. 
    
    Now observe that the $g$-fold stabilization of $S$ yields a surface $S^g$ such that decomposing $(Y,\gamma)$ along $S^g$ is not the connect sum of taut sutured manifolds. Thus $\SFH(Y,\gamma)$ is trivial in $A_\alpha$-grading $c(S^g,t)$ by~\cite[Proposition 9.18]{juhasz2006holomorphic}. Computing $c(S^g,t)$ from the definition shows that $c(S^g,t)=c(S,t)-2g$, so we have that $c=m$.

    It remains only to show that if $\Sigma$ is a decomposing surface then $c(\Sigma,t)<c$. But this can argued as above; if $c(\Sigma,t)>c$, then some stabilization $\Sigma'$ of has $c(\Sigma,t)=c$, but then the manifold obtained by decomposing $(Y,\gamma)$ along $\Sigma'$ has non-trivial sutured Floer homology, a contradiction, since $\Sigma'$ is stabilized.
\end{proof}

\section{Floer minimal knots in sutured manifolds}\label{sec:mainthm}

In this section we prove Theorem~\ref{thm:main}, which we restate for the reader's convenience:

\main*

 Our first goal is to reduce to the case that $(Y(K),\gamma(K))$ is taut, so that we can apply Theorem~\ref{thm:hierarchy}. We begin by studying the case that $(Y(K),\gamma(K))$ is compressible.

\begin{lemma}\label{lem:incompressible}
    Suppose $R_\pm(\gamma)$ are incompressible. Then either $(Y(K),\gamma(K))$ is incompressible, or $(Y,\gamma)$ can be written as $(Y',\gamma')\#(S^1\times  S^2)$ and $K$ is the spherical $1$-braid closure in $S^1\times S^2$.
\end{lemma}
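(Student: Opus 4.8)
The plan is to prove the dichotomy by assuming the first alternative fails and deriving the second: suppose $(Y(K),\gamma(K))$ is \emph{compressible}, so there is a compressing disk $D$ for one of $R_\pm(\gamma(K))$; without loss of generality for $R_+(\gamma(K))$. The key structural observation is that $R_+(\gamma(K))$ is the disjoint union $R_+(\gamma)\sqcup A_+$, where $A_+$ is one of the two annuli into which the pair of meridional sutures $\mu_K$ cuts the torus $\partial_K$. Since the two sutures are parallel copies of a meridian of $K$, the core of $A_+$ is isotopic on $\partial_K$ to a meridian $\mu$ of $K$.

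The next step is to locate $\partial D$. Because $\partial D$ is essential in $R_+(\gamma(K))$, it lies essentially in a single component, which is either a component of $R_+(\gamma)$ or the annulus $A_+$. In the first case $\partial D$ would be an essential curve of $R_+(\gamma)$ bounding a disk in $Y(K)\subset Y$, hence a compressing disk for $R_+(\gamma)$ in $Y$, contradicting the hypothesis that $R_\pm(\gamma)$ are incompressible. Therefore $\partial D\subset A_+$, and being essential in an annulus it is isotopic to the core; after an isotopy we may take $\partial D=\mu$. Thus compressibility of $(Y(K),\gamma(K))$ is equivalent to the existence of a properly embedded disk in $Y(K)$ whose boundary is a meridian of $K$.

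With such a disk in hand I would exhibit the $S^1\times S^2$ summand explicitly. Set $X:=\nu(K)\cup\nu(D)$, the union of the solid torus $\nu(K)$ with a neighborhood of $D$; this is precisely the result of attaching a $2$-handle to the solid torus $\nu(K)$ along the meridian $\mu$. Capping $\mu$ with the meridional disk $D_\mu\subset\nu(K)$ on one side and with $D$ on the other produces an embedded sphere $S=D\cup D_\mu$ meeting $K$ transversely in a single point, so $S$ is non-separating and is disjoint from $\partial Y$. Compressing $\partial\nu(K)=T^2$ along $\mu$ shows $\partial X\cong S^2$, while $X\simeq S^1\vee S^2$ since $\mu$ is null-homotopic in $\nu(K)$. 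Hence the closed manifold $\widehat{X}:=X\cup_{S^2}B^3$ is a closed orientable $3$-manifold with $\pi_1(\widehat{X})\cong\Z$, so by the prime decomposition theorem (an aspherical closed $3$-manifold cannot have $\pi_1\cong\Z$) we get $\widehat{X}\cong S^1\times S^2$, inside which $K$ is the core $S^1\times\{\mathrm{pt}\}$, i.e. the spherical $1$-braid closure. Writing $Z:=\overline{Y\setminus X}$ and capping its spherical boundary with a ball gives $(Y',\gamma')$, with the sutures $\gamma$ untouched because $S$ is disjoint from $\partial Y$; reassembling yields $(Y,\gamma)\cong(Y',\gamma')\#(S^1\times S^2)$ with $K$ the spherical $1$-braid closure.

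The neighborhood-and-handle bookkeeping in the second and third paragraphs is routine; the main point to get right is that the disk $D$ does not merely produce \emph{some} non-separating sphere but pins the summand down so precisely that $K$ is literally its core. The cleanest way to secure this is the $X=\nu(K)\cup\nu(D)$ description together with the $\pi_1\cong\Z$ classification of closed $3$-manifolds, which simultaneously identifies the summand as $S^1\times S^2$ and $K$ as its core, and in particular sidesteps any dependence on the framing of the attached $2$-handle. I would also take care to record that this identification respects the sutured structure, so that the resulting decomposition is a connect sum of the sutured manifold $(Y',\gamma')$ with the closed manifold $S^1\times S^2$ in the sense used elsewhere in the paper.
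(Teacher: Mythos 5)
Your proposal is correct and follows essentially the same route as the paper: rule out $\partial D\subset R_\pm(\gamma)$ using incompressibility, conclude $\partial D$ is a meridian on $\partial_K$, cap it off with a meridional disk to get a sphere meeting $K$ once, and take a neighborhood of that sphere together with $K$ to exhibit the punctured $S^1\times S^2$ summand containing $K$ as its core. Your treatment is somewhat more explicit than the paper's (locating $\partial D$ in the annulus $A_+$ and identifying the summand via the $\pi_1\cong\Z$ classification rather than by inspection), but the underlying argument is the same.
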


\begin{proof}
    Suppose $R_\pm(\gamma(K))$ is compressible. Let $D$ be a compressing disk. Observe that either $\partial D$ is contained in $\partial Y$, so that $(Y,\gamma)$ is compressible, or $\partial D\subset\partial_K(Y(K))$. Observe that $\partial D$ meets $\partial(Y(K))$ in a meridian for $K$. It follows that there is an embedded sphere, $\overline{D}$ in $Y$ obtained by capping off $D$ with a meridional disk for $K$. Moreover $\overline{D}$ is intersected exactly once by $K$. It follows that $\overline{D}$ is non-separating. Consider a neighborhood of $\overline{D}\cup K$, $N$. Observe that $N$ is given by $S^1\times  S^2\setminus B^3$ and that $\partial N$ is a separating $2$-sphere witnessing the decomposition of $(Y(K),\gamma(K))$ as $(Y',\gamma')\#T_0$ and $(Y,\gamma)$ as $(Y',\gamma')\# (S^1\times  S^2)$, where $T_0$ is the unknot with sutures of slope zero.
\end{proof}

\begin{lemma}
    Suppose that $K$ is a knot in a sutured manifold $(Y,\gamma)$ such that $R_\pm(\gamma)$ are Thurston norm minimizing in $H_2(Y,\gamma$ and $R_\pm(\gamma(K))$ are incompressible. Then $R_\pm(\gamma(K))$ are Thurston norm-minimizing in $H_2(Y(K),\gamma(K))$.
\end{lemma}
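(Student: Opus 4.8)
The plan is to argue by contradiction, converting a hypothetical norm-reduction of $R_\pm(\gamma(K))$ inside $Y(K)$ into a norm-reduction of $R_\pm(\gamma)$ inside $Y$, which would contradict the hypothesis. I will treat $R_+$; the argument for $R_-$ is identical. First I would record the structural relationship between the two surfaces. Since $K$ lies in the interior of $Y$ and is therefore disjoint from $\partial Y$, passing from $(Y,\gamma)$ to $(Y(K),\gamma(K))$ leaves the old boundary untouched and only adds the new torus $\partial_K$, which the two meridional sutures $\mu_K$ cut into a pair of annuli $A_+\sqcup A_-$. Thus $R_+(\gamma(K))=R_+(\gamma)\sqcup A_+$, where $A_+$ is a meridional annulus with $\partial A_+=\mu_K$. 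As $\chi(A_+)=0$ and $\partial A_+$ runs parallel to the suture, this gives $x(R_+(\gamma(K)))=x(R_+(\gamma))$, and under the filling of $\nu(K)$ it identifies the class $[R_+(\gamma(K))]$ with the image of $[R_+(\gamma)]$ up to a boundary-parallel correction supported near $\partial_K$.

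Next, suppose towards a contradiction that $R_+(\gamma(K))$ is not Thurston norm minimizing, so there is a properly embedded surface $S\subset(Y(K),\gamma(K))$ with $[S]=[R_+(\gamma(K))]$ and $x(S)<x(R_+(\gamma(K)))$. The key construction is to cap $S$ off inside $\nu(K)$ to produce a surface $\hat{S}\subset Y$ representing $[R_+(\gamma)]\in H_2(Y,\gamma)$. To do this I would analyze $\partial S\cap\partial_K$ as a $1$-cycle on the torus $\partial_K$: the homology class $[R_+(\gamma(K))]$ forces the longitudinal component of this cycle to vanish (equivalently, $S$ has zero algebraic intersection with a meridian disk of $\nu(K)$), so after an innermost-disk cleanup and an isotopy the curves of $\partial S\cap\partial_K$ may be taken to be honest meridians. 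Capping these with disjoint meridional disks in $\nu(K)$ yields $\hat{S}$; each such disk raises $\chi$ by one and removes boundary, so that $x(\hat{S})\leq x(S)$.

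Finally, since $[\hat{S}]=[R_+(\gamma)]$ and $x(\hat{S})\leq x(S)<x(R_+(\gamma(K)))=x(R_+(\gamma))$, the surface $\hat{S}$ contradicts the hypothesis that $R_+(\gamma)$ is Thurston norm minimizing in $H_2(Y,\gamma)$. This establishes the claim, and the same argument applied to $R_-$ completes the proof.

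I expect the main obstacle to be the boundary analysis on $\partial_K$, together with the bookkeeping that the capping operation never increases the sutured Thurston norm: one must verify that the curves of $\partial S\cap\partial_K$ really can be arranged to be meridians, that the capping disks may be chosen disjointly, and that no spherical or trivial-disk components are introduced that would spoil the class or the norm comparison. This is exactly where the incompressibility of $R_\pm(\gamma(K))$ enters. On the one hand it is a genuinely necessary hypothesis, since a compressible $R_\pm(\gamma(K))$ could be compressed to strictly lower norm and so could never be minimizing; on the other hand, combined with the normal-form input of Lemma~\ref{lem:incompressible}, it lets me assume the relevant surfaces meet $\partial_K$ in essential parallel curves, so that the homological constraint pins their slope to the meridian and the capping is efficient rather than obstructed.
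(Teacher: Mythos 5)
Your proposal is correct and follows essentially the same route as the paper: assume a norm-reducing representative of $[R_\pm(\gamma(K))]$, observe that it meets $\partial_K$ in meridians, cap those off with meridional disks in $\nu(K)$ to get a surface in $Y$ of no larger sutured norm, and contradict the minimality of $R_\pm(\gamma)$ in $(Y,\gamma)$. The only real difference is where the incompressibility of $R_\pm(\gamma(K))$ is deployed: the paper uses it at the end to rule out disk components of the competing surface with meridional boundary on $\partial_K$ (such a disk would be a compressing disk for $R_\pm(\gamma(K))$), and to rule out annular components whose capped-off images would compress $R_\pm(\gamma)$, whereas you fold it into the boundary cleanup on $\partial_K$ --- both are legitimate ways of handling the same degenerate components.
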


\begin{proof}
    Suppose that there is a surface $\Sigma$ in $(Y(K),\gamma(K)))$ representing $[R_\pm(\gamma(K))]\in H_2(Y(K),\gamma(K))$ with $$x(\Sigma)< x(R_\pm(\gamma(K)))=x(R_\pm(\gamma)).$$ Observe that $\Sigma$ meets the boundary component of $Y(K)$ corresponding to $K$ in a collection of meridians. Thus in $Y$ we can obtain a surface $\overline{\Sigma}$ in $Y$ from $\Sigma$ by capping off the corresponding boundary components of $\Sigma$ with disks. Observe that $x(\overline{\Sigma})\leq x(\Sigma)\leq x(R_\pm(\gamma))$ Since $R_\pm(\gamma)$ are Thurston norm minimizing for $[\overline{\Sigma}]=[R_\pm(\Sigma)]$ it follows that $x(\overline{\Sigma})=x(\Sigma)$. This implies that every component of $\Sigma$ that has a disk added has non-negative Euler characteristic; i.e. such components are disks or annuli. But if $\Sigma$ contains such an annular component, then $\overline{\Sigma}$ would contain a compressing disk for $R_\pm(\gamma)$. On the other hand, if $\Sigma$ contains a disk component, then $R_\pm(\gamma(K))$ would be compressible, a contradiction.
\end{proof}

Before the next lemma we fix some notation. If $Y$ is a manifold with a spherical boundary component, we let $\overline{Y}$ be the manifold obtained by capping off a fixed spherical boundary component with a $3$-ball. We leave the specific spherical boundary component implicit on our notation. If $Y$ carries a sutured structure $(Y,\gamma)$ then set $(\overline{Y},\overline{\gamma})$ to be the sutured structure on $\overline{\gamma}$ given by all components of $\gamma$ that do not lie on the spherical boundary of $Y$ capped off with a $3$-ball.

\begin{lemma}\label{lem:spericalboundarycase}
    Suppose $K$ is a knot in a sutured manifold $(Y,\gamma)$ with spherical boundary components. If $\SFH(Y(K),\gamma(K))=2\SFH(Y,\gamma)$  and $|\partial Y|>1$ then $\SFH(\overline{Y}(K),\overline{\gamma}(K))=2\SFH(\overline{Y},\overline{\gamma})$. If $|\partial Y|=1$ then $\SFH(\overline{Y}(K),\gamma(K))=\widehat{\HF}(\overline{Y})$.
\end{lemma}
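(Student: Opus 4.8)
The plan is to reduce both statements to Juh\'asz's formula for capping off a spherical boundary component carrying a single suture, namely~\cite[Equation 9.1]{juhasz2006holomorphic}, applied in the two regimes distinguished by whether the capped manifold has nonempty boundary or is closed. Throughout I will use that $K$ lies in the interior of $Y$, away from the fixed spherical boundary component $S$; hence capping $S$ off commutes with passing to the knot exterior, so that $\overline{Y}(K)=\overline{Y(K)}$, and the sphere $S$ together with its single suture appears unchanged as a boundary component of both $Y$ and $Y(K)$.

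First I would treat the case $|\partial Y|>1$. Then $\overline{Y}$, and likewise $\overline{Y}(K)$ (which retains the torus boundary $\partial_K$ with its meridional sutures), still has nonempty boundary, so both $(Y,\gamma)$ and $(Y(K),\gamma(K))$ are recovered from $(\overline{Y},\overline{\gamma})$ and $(\overline{Y}(K),\overline{\gamma}(K))$ by removing a ball and adding a single suture. Applying~\cite[Equation 9.1]{juhasz2006holomorphic} twice gives $\SFH(Y,\gamma)\cong\SFH(\overline{Y},\overline{\gamma})\otimes V$ and $\SFH(Y(K),\gamma(K))\cong\SFH(\overline{Y}(K),\overline{\gamma}(K))\otimes V$, where $V$ has rank two. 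Substituting these into the hypothesis $\rank(\SFH(Y(K),\gamma(K)))=2\rank(\SFH(Y,\gamma))$ and cancelling the common factor $\rank(V)=2$ yields $\rank(\SFH(\overline{Y}(K),\overline{\gamma}(K)))=2\rank(\SFH(\overline{Y},\overline{\gamma}))$, as desired.

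For the case $|\partial Y|=1$, so that $S$ is the only boundary component of $Y$ and $\overline{Y}$ is closed, the asymmetry between the two capping operations is the crux. Capping $S$ off in $Y$ produces a closed manifold, and since $Y$ is $\overline{Y}$ with a ball removed and a single suture added, the definition of $\widehat{\HF}$ of a closed manifold gives $\SFH(Y,\gamma)\cong\widehat{\HF}(\overline{Y})$ with \emph{no} extra factor of $V$. By contrast $\overline{Y}(K)$ still carries the torus boundary $\partial_K$ with meridional sutures $\mu_K=\overline{\gamma}(K)$, so $(Y(K),\gamma(K))$ is $(\overline{Y}(K),\mu_K)$ with a ball removed and a suture added, and~\cite[Equation 9.1]{juhasz2006holomorphic} gives $\SFH(Y(K),\gamma(K))\cong\SFH(\overline{Y}(K),\mu_K)\otimes V$. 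Feeding both identities into the hypothesis gives $2\rank(\SFH(\overline{Y}(K),\mu_K))=2\rank(\widehat{\HF}(\overline{Y}))$, hence $\rank(\SFH(\overline{Y}(K),\mu_K))=\rank(\widehat{\HF}(\overline{Y}))$, which is exactly the asserted equality; since $\SFH(\overline{Y}(K),\mu_K)\cong\widehat{\HFK}(\overline{Y},K)$, this records that $K$ is Floer simple in the closed manifold $\overline{Y}$.

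The step I expect to require the most care is confirming that each spherical boundary component carries a single suture, so that~\cite[Equation 9.1]{juhasz2006holomorphic} (equivalently, the $\otimes V$ behaviour of a sphere-with-one-suture) applies verbatim, and tracking that this single-suture convention is precisely what makes the closed manifold $\overline{Y}$ contribute $\widehat{\HF}(\overline{Y})$ rather than $\widehat{\HF}(\overline{Y})\otimes V$ in the $|\partial Y|=1$ case. Everything else is rank bookkeeping with the factor $\rank(V)=2$; the essential point is that the knot exterior always keeps a boundary component and so always acquires the extra $V$, whereas the ambient manifold loses its last boundary component exactly when $|\partial Y|=1$, which is what converts the factor of $2$ into the Floer-simplicity equality.
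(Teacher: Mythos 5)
Your proposal is correct and follows essentially the same route as the paper: both arguments reduce to Juh\'asz's $\otimes V$ formula for removing a ball with a single suture, together with the identification $\SFH(M\setminus B^3,\text{one suture})\cong\widehat{\HF}(M)$, and then cancel the common rank-two factor. The only point the paper handles that you flag but do not carry out is the reduction of a spherical boundary component carrying more than one suture to the single-suture case, which the paper does by also invoking the fact that adding or removing an excess pair of parallel sutures changes $\SFH$ by a further tensor factor of $V$.
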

\begin{proof}
    Recall that if $(\mathring{Y},\mathring{\gamma})$ is obtained from $(Y,\gamma)$ by removing an embedded $3$-ball and endowing the resulting spherical boundary component with a single suture, then $\SFH(\mathring{Y},\mathring{\gamma})\cong\SFH(Y,\gamma)\otimes V$. Moreover, if $(Y',\gamma')$ is obtained from $(Y,\gamma)$ by adding an extra pair of appropriately oriented parallel sutures then $\SFH(Y',\gamma')\cong\SFH(Y,\gamma)\otimes V$. Finally, if $(Y,\gamma)$ is obtained from a closed manifold $M$ by removing a ball and endowing the resulting spherical boundary component with a single suture, then $\widehat{\HF}(M)\cong\SFH(Y,\gamma)$. The result follows directly from these three facts.
\end{proof}

The above lemma reduces the problem to treating the case in which $\partial Y$ has no spherical boundary components. Our next goal is to understand spheres embedded in the interior of $Y$. In that direction, we state a version of Knesser's theorem for sutured manifolds. Recall that given a group $G$ one can define $p(G)$ to be the infimum of the cardinality of a generating set for $G$. If $G$ is finitely presented and is the free product of two groups $G_1$ and $G_2$ then $p(G)=p(G_1)+p(G_2)$ by Grushko's theorem.
\begin{lemma}\label{lem:knesser}
    Any sutured manifold without spherical boundary components can be written as a finite connect sum of sutured manifolds $(Y_i,\gamma_i)$ and closed $3$-manifolds $M_j$ where each $Y_i$ is irreducible and each $M_j$ is either irreducible or $S^2\times S^1$.
\end{lemma}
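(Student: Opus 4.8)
The plan is to reduce the statement to the classical Kneser--Milnor prime decomposition theorem for compact oriented $3$-manifolds, and then to do the bookkeeping needed to carry along the sutured structure. The point that makes the sutured version essentially free is that every $2$-sphere along which one performs a connect sum decomposition lies in the \emph{interior} of $Y$, whereas the sutures $\gamma$ lie on $\partial Y$. Since $\partial Y$ has no spherical components by hypothesis, the decomposing spheres are disjoint from $\gamma$, so the sutured data is simply transported by the decomposition and never interacts with it.

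First I would apply the existence part of prime decomposition to the underlying oriented manifold $Y$. Concretely, choose a maximal collection $\{S_1,\dots,S_k\}$ of disjoint, pairwise non-parallel embedded $2$-spheres in the interior of $Y$, none of which bounds a ball. Kneser's finiteness lemma guarantees that $k$ is finite; alternatively, the number of non-trivial factors is bounded by Grushko's theorem, since $\pi_1$ of a connect sum is the free product of the factor groups and $p(\pi_1(Y))$ is finite. Cutting $Y$ along $\bigcup_i S_i$ and capping each new spherical boundary component with a $3$-ball produces a disjoint union of pieces; since the capping balls are glued in along interior spheres, reversing this exhibits $(Y,\gamma)$ as an iterated connect sum of these pieces in the sense used throughout the paper.

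Next I would sort and identify the pieces. A piece meeting $\partial Y$ inherits a sutured structure: each boundary surface of $Y$ is connected and disjoint from the interior decomposing spheres, hence lies entirely in a single piece, so $\gamma$ distributes unambiguously and yields sutured manifolds $(Y_i,\gamma_i)$; a piece disjoint from $\partial Y$ is a closed $3$-manifold $M_j$. By maximality of the sphere system each piece is \emph{prime}: an essential sphere in a piece would, after being isotoped off the capping balls, give a sphere in $Y$ that is disjoint from, and not parallel to, the $S_i$ and that bounds no ball, contradicting maximality. Since $Y$ is oriented, the only non-irreducible orientable prime is $S^2\times S^1$, which is closed; thus each closed piece $M_j$ is irreducible or $S^2\times S^1$, while each piece $(Y_i,\gamma_i)$ with non-empty boundary --- which is necessarily non-spherical, as $\partial Y$ contributes no spheres and the cutting spheres have been capped --- is irreducible.

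All of the three-dimensional content here is classical, so the genuine input is Kneser's finiteness lemma, which is what makes the process terminate. Everything else is bookkeeping specific to the sutured category, and the only steps requiring real care are the standard verification that maximality forces the bounded pieces to be irreducible rather than reducible (handling spheres that might enclose capping balls), and the check that no piece acquires a spurious spherical boundary component, so that the resulting decomposition genuinely matches the sutured connect sum used in the paper.
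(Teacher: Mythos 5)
Your argument is correct, but it takes a genuinely different route from the paper's. You reduce everything to the classical Kneser--Milnor machinery: take a maximal system of disjoint, pairwise non-parallel essential spheres in the interior, invoke Kneser's finiteness lemma for termination, cap off, and observe that the sutured data rides along for free because the decomposing spheres live in the interior while $\gamma$ lives on the (nowhere spherical) boundary. The paper instead runs a bespoke induction on the pair $\bigl(p(\pi_1(Y)),|\partial Y|\bigr)$: it splits along one reducing sphere at a time, uses Grushko's theorem to see that $p(\pi_1)$ drops when the sphere is non-separating or when both separated factors have nontrivial $\pi_1$, counts boundary components otherwise, and uses the half-lives half-dies principle plus the Poincar\'e conjecture to rule out a simply connected factor (which would force spherical boundary or an inessential sphere). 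Your approach buys a cleaner global statement --- all the summands appear at once from a single sphere system, and the sutured bookkeeping is manifestly trivial --- at the cost of importing the normal-surface-flavored finiteness lemma and the standard (and slightly fiddly, as you note) verification that maximality forces each capped piece to be prime. The paper's induction avoids Kneser's lemma entirely; its termination mechanism is essentially the Grushko bound you mention as an alternative, and it confronts the simply-connected-factor issue explicitly rather than hiding it inside the classical theorem. Both are complete modulo standard facts; just be aware that your ``isotope an essential sphere off the capping balls'' step really requires the usual innermost-disk surgery argument rather than a bare general-position isotopy, which is exactly the point you flagged as needing care.
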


We prove this for the sake of completeness.

\begin{proof}
  We will proceed by induction on the complexity of the sutured manifold, which we define to be $p(\pi_1(Y))$ and the number of boundary components of $(Y,\gamma)$. In the base case the sutured manifold has one boundary component and the fundamental group is trivial. By the half-lives half dies-principle it follows that $\rank(H_1(\partial Y))\leq\frac{1}{2}\rank(H_1(Y))\leq p(\pi_1(Y))=0$, so that $\partial Y$ must be a disjoint union of spheres, contradicting our assumption.

  For the inductive step suppose $S$ is an embedded reducing sphere in $(Y,\gamma)$. If $S$ is one sided, then we can pick a path $\phi$ from $S$ to $S$ such that the boundary of a neighborhood is a splitting sphere realizing $Y$ as the connect sum of a sutured manifold $(Y',\gamma')$ and $S^1\times  S^2$. Since $$p(\pi_1(Y'))=p(\pi_1(Y))+p(\pi_1(S^1\times S^2))=p(\pi_1(Y))+1,$$ we can proceed by induction.

  If $S$ is two-sided, then we can realize $(Y,\gamma)$ as either the connect sum of two sutured manifolds or the connect sum of a sutured manifold and a closed manifold. In the former case, we are done because the two resulting sutured manifolds have a smaller number of boundary components. In the latter case $\pi_1(Y)\cong\pi_1(Y_1)*\pi_1(Y_2)$. Thus, unless $\pi_1(Y_i)=\{*\}$ for either $i$ we  can proceed by induction. If, say, $\pi_1(Y_1)=\{*\}$, then once again the half-lives half-dies principle implies that all of $Y_1$'s boundary components are spherical. Thus, $Y_1$ has no boundary components by assumption. The Poincar\'e conjecture then implies that $Y_i=S^3$, contradicting our assumption that $S$ is a reducing sphere.
\end{proof}

We can now prove the main theorem.
\begin{proof}[Proof of Theorem~\ref{thm:main}]

  Suppose $(Y,\gamma)$, $K$ are as in the statement of the Lemma. By Lemma~\ref{lem:spericalboundarycase}, it suffices to treat the case in which $\partial Y$ has no spherical components.
  
  We first reduce to the case that $(Y(K),\gamma(K))$ is irreducible. By Lemma~\ref{lem:knesser} we can write $(Y(K),\gamma(K))$ as a connected sum:
   \begin{equation*}
      \big(\underset{1\leq i\leq m}{\#}M_i\big)\#\big(\underset{1\leq j\leq n}{\#}(Y_i,\gamma_i)\big)\end{equation*} 
      
\noindent      where each $Y_j$ is irreducible and each $M_i$ is either irreducible or $S^1\times S^2$. Observe that each $(Y_i,\gamma_i)$ is taut, as else $\rank(\SFH(Y_i,\gamma_i))=0$~\cite[Proposition 9.18]{juhasz2006holomorphic} so that $\rank(\SFH(Y(K),\gamma(K)))=0$ by the generalized K\"unneth formula for sutured Floer homology, a contradiction.  We have two cases; either one of the manifolds $Y_j$ has a single boundary component $\partial_K Y$ corresponding to $K$ or there is no such component.

   Suppose we are in the first case. After relabeling we may take $j=1$. Then $(Y_1,\gamma_1)$ is given by the sutured exterior of a knot in the three manifold $N$ obtained by capping of a meridional suture for $(Y_j,\gamma_j)$ with a disk and then filling in the resulting spherical boundary component with a $3$-ball. Observe that:

   \begin{equation*}   2\rank(\SFH(N\#\big(\underset{1\leq i\leq m}{\#}M_i\big)\#\big(\underset{2\leq j\leq n}{\#}(Y_i,\gamma_i)\big)=\rank(\SFH(\big(\underset{1\leq i\leq m}{\#}M_i\big)\#\big(\underset{1\leq j\leq n}{\#}(Y_i,\gamma_i)\big))
   \end{equation*}

  So that $2\rank(\widehat{\HF}(N))=\rank(\SFH(N_K,\gamma_K))=\rank(\widehat{\HFK}(K,N))$ by two applications of Equation~(\ref{eq:Kunneth}).
 Thus $K$ is a Floer simple knot in $N$, and we have the desired result.
 
 The rest of this proof is devoted to treating the second case. After relabeling, we may take $\partial_K Y$ to be a boundary component of $Y_1$. Let $(Y_1',\gamma_1')$ be the sutured manifold obtained by capping off a meridional suture for $K$ and filling in the spherical boundary component. Observe that:
  \begin{equation*}   2\rank(\SFH((Y_1',\gamma_1')\#\big(\underset{1\leq i\leq m}{\#}M_i\big)\#\big(\underset{2\leq j\leq n}{\#}(Y_i,\gamma_i)\big)=\rank(\SFH(\big(\underset{1\leq i\leq m}{\#}M_i\big)\#\big(\underset{1\leq j\leq n}{\#}(Y_i,\gamma_i)\big))
   \end{equation*}
 
\noindent so that $2\rank(\SFH(Y_1',\gamma_1'))=\rank(\SFH(Y_1,\gamma_1))$ by two applications of Equation~(\ref{eq:Kunneth}). Thus, it is enough to treat the case that $(Y(K),\gamma(K))$ is irreducible and has at least two boundary components. We duly assume that $(Y(K),\gamma(K))$ satisfies these two properties henceforth. Note that we may also assume that $(Y(K),\gamma(K))$ has no spherical boundary components, by repeated applications of Lemma~\ref{lem:spericalboundarycase}, so we also assume that this holds henceforth. 

Now, by Lemma~\ref{lem:sinhom} we have that $\alpha\cap[\mu_K]=0$ for some $\alpha\in H_2(Y_K,\partial Y_K)$. Let $\alpha$ be such a class. Since $(Y(K),\gamma(K))$ irreducible and $\rank(\SFH(Y(K),\gamma(K)))\neq0$, we have that $(Y(K),\gamma(K))$ is taut by~\cite[Proposition 9.18]{juhasz2006holomorphic}. Lemma~\ref{thm:hierarchy} then states that either $(Y(K),\gamma(K))$ contains a product disk or there is a properly embedded representative $\Sigma$ of $\alpha$ such that $\Sigma$ is the first surface in a sutured manifold hierarchy for $(Y(K),\gamma(K))$ that terminates at the $n$th step with the last decomposition along a surface $\Sigma_n$ that intersects $\partial_KY_K$ in a collection of meridians.
 
We proceed by induction on the length of the sutured hierarchy. We have three cases; either $\Sigma$ is a product disk, $\widehat{\Sigma}_i\cap K=\emptyset$ or $\widehat{\Sigma}_i\cap K\neq\emptyset$, so that $i=n$. Here $\widehat{\Sigma}_i$ indicates the surface in $(Y_{i-1},\gamma_{i-1})$ obtained by capping off $\Sigma_i\subset Y(K)$ with meridional disks in a neighborhood of $K$, which we can do since $\Sigma$ meets the boundary of a neighborhood of $K$ in a collection of meridians.\\

\noindent\textbf{Case 1: $(Y(K),\gamma(K))$ contains a product disk.}  Observe that since $(Y(K),\gamma(K))$ contains a product disk and we are assuming that $(Y(K),\gamma(K))$ is irreducible, $(Y,\gamma)$ also contains a product disk. Now applying~\cite[Lemma 9.13]{juhasz2006holomorphic} to $(Y,\gamma)$ and $(Y(K),\gamma(K))$ implies that $\rank(\SFH(Y_1(K),\gamma_1(K)))=2\rank(\SFH(Y_1,\gamma_1))$, as desired.\\

For the next two cases, we equip $\SFH(Y(K),\gamma(K))$ with the $A_{\alpha}$ grading. Since $\SFH(Y(K),\gamma(K))$ and ${\SFH(Y,\gamma)\otimes V}$ have the same rank --- where $V$ is a rank two vector space supported in $A_{H_K(\alpha)}$-grading $0$ --- it follows from Lemma~\ref{lem:spectral} that $\SFH(Y(K),\gamma(K))\cong \SFH(Y,\gamma)\otimes V$ as an $A_{H_K(\alpha)}$ graded vector spaces. The hypothesis in Lemma~\ref{lem:spectral} that $[\mu_K]$ is non-torsion in $H_1(Y(K))$ can be verified by noting that since $[K]\neq 0$ in $H_1(Y;\Q)/H_1(\partial Y;\Q)$, there is a properly embedded surface $\Sigma$ in $(Y(K),\partial(Y(K)))$ that meets $\partial_K Y(K)$ in a collection of parallel oriented curves with non-zero intersection number with $\mu_K$, viewed as a subset of $\partial_KY(K)$. Consequently $[\mu_K]$ cannot be torsion, since $l[\mu_K]\cap [\Sigma]\neq 0$ for any $l$.\\

\noindent\textbf{Case 2: $\widehat{\Sigma}\cap K=\emptyset$.} In this case, we show that $\rank(\SFH(Y_1(K),\gamma_1(K)))=2\rank(\SFH(Y_1,\gamma_1))$. To do so, we first add appropriate product one handles to $(Y,\gamma)$ and $(Y(K),\gamma(K))$ that, in particular, avoid $\partial \Sigma$, to obtain strongly balanced sutured manifolds $(Y',\gamma')$ and $(Y'(K),\gamma'(K))$. Note that this does not change either of the sutured Floer homology groups. This case now follows from the fact that $\SFH(Y',\gamma')\otimes V$ and $\SFH(Y'(K),\gamma'(K))$ have the same rank in the minimal non-trivial $A_{\alpha)}$ (respectively $A_{H_K(\alpha)}$) grading by Lemma~\ref{lem:spectral}. Noting that we have arranged that $\Sigma$ satisfies the necessary properties in Theorem~\ref{thm:hierarchy}, an application of Juh\'asz' sutured decomposition formula~\cite[Theorem 1.3]{juhasz2008floer} to $(Y',\gamma')$ and $((Y'(K),\gamma'(K))$, shows that $2\rank(\SFH(Y'_1,\gamma'_1))=\rank(\SFH(Y'_1(K),\gamma'_1(K)))$, where $(Y'_1,\gamma'_1)$ is the sutured manifold obtained by decomposing $(Y',\gamma')$ along $\Sigma$. Note that $(Y_1,\gamma_1)$ can be obtained from $(Y'_1,\gamma'_1)$ by decomposing along the product disks that are the co-cores of the product $1$-handles. The desired conclusion now follows from applications of~\cite[Lemma 9.11]{juhasz2006holomorphic} to $(Y'_1,\gamma'_1)$ and $(Y'_1(K),\gamma'_1(K))$.\\

  \noindent\textbf{Case 3: $\widehat{\Sigma}\cap K \neq \emptyset$.} We prove the case in which $(Y,\gamma)$ is strongly balanced, noting that the balanced case can be recovered by adding and removing appropriate product one handles as in the $\widehat{\Sigma}\cap K=\emptyset$ case. Now, as noted in the previous case, the spectral sequence from $\SFH(Y(K),\gamma(K))$ to $\SFH(Y,\gamma)$ collapses immediately, so by Lemma~\ref{lem:spectral} we have that $\SFH(Y(K),\gamma(K))\cong\SFH(Y,\gamma)\otimes V$, where we equip $\SFH(Y,\gamma)$ with the $A_\alpha$-grading and $\SFH(Y(K),\gamma(K))$ with the $A_{H_K(\alpha)}$-grading. We derive a contradiction by showing that the span of the $A_{H_K(\alpha)}$-gradings in which $\SFH(Y(K),\gamma(K))$ has non-trivial support is strictly larger than the span of the with the $A_\alpha$-gradings in which $\SFH(Y,\gamma)$ has non-trivial support. Specifically we show that the minimum $A_{H_K(\alpha)}$ in which $\SFH(Y(K),\gamma(K))$ is non-trivial is strictly smaller than the $A_\alpha$-gradings in which $\SFH(Y,\gamma)$ has non-trivial support and that the maximum $A_\alpha$-gradings in which $\SFH(Y(K),\gamma(K))$ has non-trivial support is at least as large as $\SFH(Y,\gamma)$. By Lemma~\ref{lem:maxagrading}, the minimum non-trivial $A_\alpha$-grading is given by the maximum value of $c(\Sigma,t)$ over all taut decomposing surfaces $\Sigma$. Let $S$ be a surface maximizing this quantity.  Consider the surface $\widehat{S}$ in $(Y,\gamma)$ obtained by capping-off the boundary components of $\partial S$ on $\partial_K$ with meridional disks for $K$. Observe that $c(\widehat{S},\overline{t})=\chi(\widehat{S})+I(\widehat{S})-r(\widehat{S},\overline{t})$. First note that $I(\widehat{S})=I(S)$, since $\widehat{S}$ intersects $\gamma$ as many times as $S$ intersects $\gamma(K)$.  We also have that $r(\widehat{S},\overline{t})=r(S,t)$ since $S$ intersects $\partial_K$ in meridians.  Finally, observe that $\chi(\widehat{S})>\chi(S)$. In particular, it follows that $c(S,t)< c(\widehat{S},\overline{t})$, so that the minimum $A_{H_K(\alpha)}$ in which $\SFH(Y(K),\gamma(K))$ is non-trivial is strictly smaller than the $A_\alpha$-gradings in which $\SFH(Y,\gamma)$ has non-trivial support.
  
  To prove that the maximum $A_\alpha$-gradings in which $\SFH(Y(K),\gamma(K))$ has non-trivial support is at least as large as $\SFH(Y,\gamma)$, one can proceed as above, this time picking a taut surface $S$ that represents $H_K(-\alpha)$ and noting that $c(S,t)\leq c(\widehat{S},\overline{t})$. In more detail, the maximum non-trivial $A_{H_K(\alpha)}$ in which $\SFH(Y(K),\gamma(K))$ is non-trivial is given by \begin{align*}\max\{\langle c_1(\s,t),[H_K(\alpha)]\rangle:\SFH(Y(K),\gamma(K))\neq 0\}&=-\min\{\langle c_1(\s,t),[H_K(-\alpha)]\rangle:\SFH(Y(K),\gamma(K))\neq 0\}\\&= -c(S,t). \end{align*} Here the second inequality follows from Lemma~\ref{lem:maxagrading}. Consider the surface $\widehat{S}$ in $(Y(K),\gamma(K))$ obtained by capping-off the boundary components of $\partial S$ on $\partial_K$ with meridional disks for $K$. Note that unlike in the case of the argument above $S$ may have no boundary components on $\partial_K$. Observe that $c(\widehat{S},\overline{t})=\chi(\widehat{S})+I(\widehat{S})-r(\widehat{S},\overline{t})$, $I(\widehat{S})=I(S)$, and $r(\widehat{S},\overline{t})=r(S,t)$ as before.  Finally, observe that $\chi(\widehat{S})\geq\chi(S)$. It follows that $c(S,t)\leq c(\widehat{S},\overline{t})$, as claimed. In turn we have that the maximum $A_{H_K(\alpha)}$ in which $\SFH(Y(K),\gamma(K))$ is non-trivial is at least the maximum $A_\alpha$-gradings in which $\SFH(Y,\gamma)$ has non-trivial support, as desired.\end{proof}

\section{The link Floer homology of homologically non-trivial links in $S^1\times S^2$}\label{sec:S1S2}

Ni classified null-homologous Floer simple knots in $\#^n(S^1\times S^2)$~\cite[Theorem 1.3]{ni2014homological}. In this section, we give some other classification results for links in $S^1\times S^2$ for which the knot Floer homology takes various special forms. To begin with, recall that $\widehat{\HF}(S^1\times S^2)$ comes equipped with an $A_{S^2}$-grading and
    $\widehat{\HF}(\#^n(S^1\times S^2))$ is rank two and supported in $A_{S^2}$-grading $0$.

Given a spherical braid $\beta$, we can consider the link in $S^1\times S^2$ shown in Figure~\ref{fig:sphericalbraidclosure}, which we are denoting $\widehat{\beta}$. We will be particularly interested in the case of the $1$-braid $\mathbf{1}_1$ --- which we call $K_1$ --- and the $2$-braid $\sigma_1$, where $\sigma_1$ is the standard Artin generator. We call this knot $K_2$. Observe that the knots $\widehat{\sigma_1^{2n+1}}$ are isotopic to $K_2$ for all $n$.

We will be interested in links $L$ that have link Floer homology of minimal rank in the maximal non-trivial $A_{\alpha_i}$-grading. We will have different cases according to the value of the geometric intersection number of $L$ with non-separating spheres in $S^2$. When $L\cap S^2=\emptyset$ we have the following more general Proposition:

\begin{proposition}\label{prop:K1comp}
Suppose that $L$ is an $m$-component link in a $3$-manifold $Y$ and there exists a non-separating sphere $S^2 \subset Y$ such that $L\cap S^2=\emptyset$. Then $Y$ splits as a connect sum $Y\cong (S^1\times S^2)\#Y'$ and ${(L,Y)\cong (\emptyset,S^1\times S^2)\#(L,Y')}$. Moreover $$\widehat{\HFL}(L,Y)\cong \widehat{\HFL}(L,Y')\otimes V$$ where $V$ is a rank two vector space supported in Alexander multi-grading $\mathbf{0}$.
\end{proposition}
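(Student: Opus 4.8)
The plan is to split the argument into a purely topological step that produces the claimed connect-sum decomposition of the pair $(L,Y)$, followed by an application of the K\"unneth formula for sutured Floer homology, Equation~(\ref{eq:Kunneth}).

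First I would establish the topological splitting. Since the embedded sphere $S^2$ is non-separating, the complement $Y\setminus\nu(S^2)$ is connected and has two spherical boundary components. As $L$ is a $1$-complex disjoint from $S^2$, the complement $Y\setminus(\nu(S^2)\cup L)$ remains path-connected, so I can choose an embedded arc $\gamma$ joining the two boundary spheres of $\nu(S^2)$ that is disjoint from $L$. A regular neighborhood $N:=\nu(S^2\cup\gamma)$ is then a copy of $(S^1\times S^2)\setminus B^3$ disjoint from $L$, and $\partial N$ is a separating sphere. Capping off the two sides exhibits $Y\cong(S^1\times S^2)\#Y'$ with $L\subset Y'$, that is, $(L,Y)\cong(\emptyset,S^1\times S^2)\#(L,Y')$; this is the same move used in the proofs of Lemma~\ref{lem:incompressible} and Lemma~\ref{lem:knesser}.

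Next I would pass to sutured link exteriors. The decomposition of pairs above induces a connect-sum decomposition of the meridional sutured link exterior $Y(L)\cong(S^1\times S^2)\#Y'(L)$, where the $S^1\times S^2$ summand carries no sutures because it is disjoint from $L$. Applying Equation~(\ref{eq:Kunneth}) with a single closed summand $M_1=S^1\times S^2$ and a single sutured summand (so that the $V^{n-1}$ factor is trivial) gives $\SFH(Y(L))\cong\widehat{\HF}(S^1\times S^2)\otimes\SFH(Y'(L))$. Identifying $\SFH$ of the meridional sutured exteriors with $\widehat{\HFL}$ --- and noting that $L$ has the same number of components in $Y$ and in $Y'$, so any common $V$-power normalization cancels on both sides --- together with $\widehat{\HF}(S^1\times S^2)\cong V$ of rank two yields $\widehat{\HFL}(L,Y)\cong\widehat{\HFL}(L,Y')\otimes V$. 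To verify the grading claim I would use the graded refinement of Equation~(\ref{eq:Kunneth}) recorded just after it: the Alexander multi-grading is read off from pairing $c_1$ of relative $\spin^c$ structures with the classes of Seifert-type surfaces for the components of $L$, and these surfaces can be isotoped entirely into the $Y'$ summand, so the tensor factor $V=\widehat{\HF}(S^1\times S^2)$ contributes only in Alexander multi-grading $\mathbf{0}$.

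The main obstacle I anticipate is the topological step rather than the Floer-theoretic one: one must be certain that the connecting arc $\gamma$ realizing the non-separating sphere as an $S^1\times S^2$ summand can be taken disjoint from $L$, so that $L$ lies entirely in $Y'$ and the $S^1\times S^2$ factor genuinely contains the empty link. Once the splitting of pairs is in hand, the homology computation is a direct invocation of the connect-sum K\"unneth formula together with the known rank and $A_{S^2}$-support of $\widehat{\HF}(S^1\times S^2)$.
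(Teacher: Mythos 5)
Your proposal is correct and follows essentially the same route as the paper's proof: take a tubular neighborhood of the sphere union a connecting arc to exhibit the punctured $S^1\times S^2$ summand disjoint from $L$, then apply the K\"unneth formula together with $\widehat{\HF}(S^1\times S^2)\cong V$ supported in Alexander grading $\mathbf{0}$. Your added care that the connecting arc can be chosen disjoint from $L$ is a detail the paper leaves implicit, but it is not a point of divergence.
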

   The proof is essentially identical to that of Lemma~\ref{lem:incompressible}.
\begin{proof}
Suppose $L$, $Y$ and $S^2$ are as in the statement of the proposition. Pick an arc $\xi$ that connects the two sides of $S^2$. Observe that $\nu(\xi\cup S^2)$ is a punctured $S^1\times S^2$. It follows that $(L,Y)\cong (S^1\times S^2)\#(L,Y')$ as claimed. Moreover, we have that  $$\widehat{\HFL}(L,Y)\cong\widehat{\HFL}(\emptyset, S^1\times S^2)\otimes\widehat{\HFL}(L,Y')$$ by the K\"unneth formula. But $\widehat{\HFL}(\emptyset, S^1\times S^2)\cong V$ where $V$ is a rank two vector space supported in Alexander multi-grading $\mathbf{0}$, so we have the desired result.
\end{proof}

If $|L\cap S^2|=1$, we have the following similar result:

\begin{proposition}\label{lem:K1component}
Suppose that $L$ is an $m$-component link in a $3$-manifold $Y$ and there exists a non-separating sphere $S^2 \subset Y$ such that $|L\cap S^2|=1$. Then $Y$ splits as a connect sum $Y\cong (S^1\times S^2)\#Y'$ and ${(L,Y)\cong (K_1,S^1\times S^2)\#(L',Y'),}$ where $L'$ is an $m-1$ component link. Moreover $\widehat{\HFL}(L,Y)\cong0$.
\end{proposition}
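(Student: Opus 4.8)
The plan is to mirror the proof of Proposition~\ref{prop:K1comp} (and the geometric setup of Lemma~\ref{lem:incompressible}), with the decomposition step essentially unchanged and a new argument for the vanishing of link Floer homology. First I would establish the connect sum decomposition. Since $|L\cap S^2|=1$, pick an arc $\xi$ in $S^2$ from one side to the other that is disjoint from the single intersection point, and consider $N=\nu(\xi\cup S^2)$. As in Proposition~\ref{prop:K1comp}, $N$ is a punctured $S^1\times S^2$, and its boundary sphere meets $L$ transversely. Because $L$ passes through $S^2$ exactly once, the component of $L$ through that point is carried into the $S^1\times S^2$ summand as a curve intersecting the non-separating sphere once: this is precisely the core of $0$-surgery on the unknot, i.e. the spherical $1$-braid closure $K_1$. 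This gives the splitting $(L,Y)\cong(K_1,S^1\times S^2)\#(L',Y')$ with $L'$ an $(m-1)$-component link, exactly as claimed.

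The second, and genuinely new, step is to prove that $\widehat{\HFL}(L,Y)\cong 0$. The natural tool is the K\"unneth formula, which gives
\begin{equation*}
\widehat{\HFL}(L,Y)\cong\widehat{\HFL}(K_1,S^1\times S^2)\otimes\widehat{\HFL}(L',Y').
\end{equation*}
So it suffices to show $\widehat{\HFL}(K_1,S^1\times S^2)\cong 0$. This is exactly the vanishing fact already flagged in the introduction and in the discussion following Theorem~\ref{thm:sphericalbraiddetection}: the sutured exterior $(Y(K_1),\gamma(K_1))$ is the sutured manifold associated to the core of $0$-surgery on the unknot in $S^3$, which has trivial sutured Floer homology. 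I would cite \cite[Proposition 9.18]{juhasz2006holomorphic} together with \cite[Theorem 1.4]{juhasz2008floer} for this, observing that the relevant sutured manifold is not taut (its exterior is a reducible or product-annulus manifold with a compressible $R_\pm$), so its $\SFH$ vanishes; and $\widehat{\HFL}$ of a link is computed as the $\SFH$ of the link exterior with meridional sutures. Once $\widehat{\HFL}(K_1,S^1\times S^2)\cong 0$, the tensor product above is zero regardless of $\widehat{\HFL}(L',Y')$, giving the result.

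The main obstacle I anticipate is bookkeeping rather than conceptual: correctly identifying the component of $L$ through $S^2$ with $K_1$ and confirming the sutured-manifold interpretation so that the vanishing statement applies verbatim. Concretely, one must check that capping off and re-forming the neighborhood $\nu(\xi\cup S^2)$ carries the single strand of $L$ to the $1$-braid closure rather than some other homologically nontrivial knot, and that no extra sutures or twisting are introduced in the process. This is the same verification performed in Lemma~\ref{lem:incompressible} for the compressible case, so I expect it to go through with minimal extra work; the vanishing of $\widehat{\HFL}(K_1,S^1\times S^2)$ is then immediate from the cited results, and the K\"unneth formula finishes the proof.
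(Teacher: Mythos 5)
Your overall route is the same as the paper's: decompose the pair, apply the K\"unneth formula, and conclude from $\widehat{\HFL}(K_1,S^1\times S^2)\cong 0$. The justification you give for that vanishing (the sutured exterior of $K_1$ is a solid torus with meridional sutures, hence has compressible $R_\pm(\gamma)$, is not taut, and so has trivial $\SFH$) is exactly the fact the paper invokes, and it is sound.

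The one concrete slip is in the decomposition step. You copy the construction of Proposition~\ref{prop:K1comp} verbatim, taking $N=\nu(\xi\cup S^2)$ for an arc $\xi$ disjoint from $L$. But then only a small arc of the component through $S^2$ lies in $N$: the splitting sphere $\partial N$ meets $L$ in two points, the inside arc caps off to $K_1$, and the outside arc caps off to a knot in $Y'$, so what you actually produce is a connected sum \emph{along a component}, with the complementary link still having $m$ components --- not the split decomposition with an $(m-1)$-component $L'$ stated in the proposition. Your assertion that ``the component of $L$ through that point is carried into the $S^1\times S^2$ summand'' does not follow from your choice of $N$. The fix is what the paper does: take $N=\nu(K\cup S^2)$, where $K$ is the component of $L$ meeting $S^2$, so that the entire component serves as the connecting arc; then $\partial N$ is disjoint from $L$, $(N,K)$ is a punctured $(S^1\times S^2,K_1)$, and $L'=L\setminus K$ has $m-1$ components as claimed. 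This does not affect your final conclusion --- either version of the K\"unneth formula has $\widehat{\HFL}(K_1,S^1\times S^2)\cong 0$ as a tensor factor, so $\widehat{\HFL}(L,Y)\cong 0$ regardless --- but as written your argument does not establish the decomposition in the form stated.
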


\begin{proof}
Suppose $L$ and $Y$ are as in the statement of the proposition. Let $K$ be a component of $L$ that meets a non-separating sphere $S^2\subset Y$. Observe that $\nu(K\cup S^2)$ is a punctured $S^1\times S^2$, and that $K$ is a copy of $K_1$ in this $S^1\times S^2$. It follows that $(L,Y)\cong (K_1,S^1\times S^2)\#(L',Y')$ as claimed. Moreover, we have that  $$\widehat{\HFL}(L,Y)\cong\widehat{\HFL}(K_1,S^1\times S^2)\otimes\widehat{\HFL}(L',Y')$$ by the K\"unneth formula. Note too that $\widehat{\HFL}(K_1,S^1\times S^2)\cong 0$, so the result follows.
\end{proof}

We now proceed to investigate non-separating spheres which intersect $L$ in two or more points.  First, we describe a construction of links in $Y\#S^1\times S^2$ from links in $Y$: given a link $L$ in $Y$ with two marked points, and a framed arc $\alpha$ connecting the two points on $L$, one can construct the link $L_{\alpha}$ in $Y\#S^1\times S^2$ by attaching a band along the framed arc $\alpha$ and doing $0$ surgery on a meridian of $\alpha$. See Figure~\ref{fig:bandsurgery}.

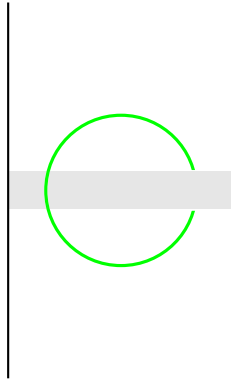
\begin{figure}
   \begin{center}
    \begin{tikzpicture}
    
          \draw[very thick, green] (0,1) arc[start angle=90, end angle=-90, radius=1];


               \draw[very thick, white] (-1.5,-0.25) -- (1.5,-0.25);
               \draw[very thick, white]   (-1.5,0.25) -- (1.5,0.25);
        \fill[gray!20] (-1.5,-0.25) rectangle (1.5,0.25);

           \draw[very thick, white] (0,-1) arc[start angle=270, end angle=90, radius=1];

          \draw[very thick, green] (0,-1) arc[start angle=270, end angle=90, radius=1];


        \draw[thick] (-1.5,-2.5) to[out=90,in=270] (-1.5,2.5);
        \draw[thick] (1.5,-2.5) to[out=90,in=270] (1.5,2.5);

    \end{tikzpicture}
\end{center}
\caption{$0$-surgery on the meridian --- shown in green --- of the band $\alpha$ --- shown in grey produces a knot in a $3$-manifolds with an $S^1\times S^2$ factor.}\label{fig:bandsurgery}
\end{figure}

The content of the following proposition is a generalization of a step in the proof of~\cite[Theorem 2.1]{Wangsplitcharp}.
\begin{proposition}
      Let $S$ be a non-separating sphere in $Y$ and $L$ be an $m$-component link in $Y$ with $|S\cap L|=2$. Then $Y\cong Y'\#S^1\times S^2$ and $L=L'_{\alpha}$ for some link $L'\subset Y'$ and framed arc $\alpha$. Moreover, if $L$ has one more component than $L'$ then
      \begin{align*}
          \widehat{\HFK}(L,Y)\cong\widehat{\HFK}(L',Y')\otimes V^{\otimes 2};\end{align*}

       \noindent  while  if $L$ and $L'$ have the same number of components  \begin{align*}
          \widehat{\HFK}(L,Y)\cong\widehat{\HFK}(L',Y')\otimes V;\end{align*}
          
        \noindent  and if $L'$ has one more component than $L$ then:\begin{align*}
          \widehat{\HFK}(L,Y)\cong\widehat{\HFK}(L',Y').\end{align*}
\end{proposition}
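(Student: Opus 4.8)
The plan is to mimic the proof of the earlier splitting results (Lemmas~\ref{lem:incompressible},~\ref{lem:spericalboundarycase} and Proposition~\ref{prop:K1comp}) and reduce everything to an application of the sutured/link Floer K\"unneth formula. First I would establish the topological splitting. Given the non-separating sphere $S$ with $|S\cap L|=2$, I would let $p,q$ denote the two intersection points and choose an arc $\xi$ in $Y\setminus L$ connecting the two sides of $S$. The regular neighborhood $\nu(S\cup\xi)$ is a punctured $S^1\times S^2$, so $Y\cong Y'\#(S^1\times S^2)$ for some $Y'$. The two points of $L\cap S$ mean that $L$ meets the $S^1\times S^2$-summand in a standard way: cutting $L$ along $S$ and tracking how the two strands pass through the neck exhibits $L$ as obtained from a link $L'\subset Y'$ together with a framed arc $\alpha$ joining two marked points of $L'$, via the band-plus-$0$-surgery-on-a-meridian construction described before the proposition (Figure~\ref{fig:bandsurgery}). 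This is the ``generalization of a step in \cite[Theorem 2.1]{Wangsplitcharp}'' the statement alludes to, and I would cite that step rather than redo it in full.

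Next I would compute the link Floer homology by the K\"unneth formula. The key observation is that $L$ in the $S^1\times S^2$-summand is, up to the connect-sum decomposition, a \emph{two-strand} spherical configuration through the neck, i.e.\ a copy of $K_2=\widehat{\sigma_1}$ (or a split union determined by whether the band connects distinct components of $L'$). Concretely, I expect the correct statement to be
\begin{equation*}
   \widehat{\HFK}(L,Y)\cong\widehat{\HFK}(L',Y')\otimes\widehat{\HFK}(\text{local piece in }S^1\times S^2),
\end{equation*}
where the local piece is the two-stranded object in $S^1\times S^2$ produced by the neck. The three cases in the statement are then bookkeeping about how many link components the band merges: if the two marked points lie on two distinct components of $L'$, the band fuses them, so $L$ has one \emph{fewer} component than $L'$, and the local tensor factor contributes trivially, giving $\widehat{\HFK}(L,Y)\cong\widehat{\HFK}(L',Y')$; if the marked points lie on the same component, $L$ has one \emph{more} component than $L'$ and the local factor contributes $V^{\otimes 2}$; the intermediate case where the component count is unchanged contributes a single $V$. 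I would verify the local link Floer homology computations by direct reference to the $\widehat{\HFK}$ of the relevant two-strand closures in $S^1\times S^2$, exactly as Proposition~\ref{prop:K1comp} and Proposition~\ref{lem:K1component} handle the one-strand cases.

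The main obstacle I anticipate is \textbf{matching the combinatorics of the band to the correct power of $V$ in each of the three cases}, and in particular correctly identifying the local link Floer homology tensor factor in $S^1\times S^2$. The subtlety is that ``number of components of $L$ versus $L'$'' is governed entirely by whether the framed arc $\alpha$ joins two points on the same component of $L'$ or on two different components, and one must carefully check that the band-surgery construction indeed realizes each of these possibilities and that the K\"unneth factor matches. I would handle this by splitting into the three cases according to the component-count relationship stated, and in each case exhibiting explicitly the connect-sum decomposition of the \emph{pair} $(L,Y)$ as a known local model (a copy of $K_1$, $K_2$, or an appropriate split/fused configuration) connect-summed with $(L',Y')$, and then applying the graded K\"unneth formula, Equation~(\ref{eq:Kunneth}), together with the known $\widehat{\HFK}$ of these small models. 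The Alexander-grading bookkeeping, which places the $V$ factors in multi-grading $\mathbf{0}$, follows from the graded version of the K\"unneth formula just as in the earlier propositions.
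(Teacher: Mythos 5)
Your first step --- taking an arc $\xi$ joining the two sides of $S$, observing that $\nu(S\cup\xi)$ is a punctured $S^1\times S^2$, and concluding $Y\cong Y'\#(S^1\times S^2)$ with $L=L'_\alpha$ --- matches the paper. The Floer-theoretic step, however, has a genuine gap: you propose to write $\widehat{\HFK}(L,Y)\cong\widehat{\HFK}(L',Y')\otimes\widehat{\HFK}(\text{local piece})$ via the K\"unneth formula, but no connect-sum decomposition of the \emph{pair} $(L,Y)$ exists here. Two strands of $L$ run through the neck, so the connect-sum sphere $\partial\nu(S\cup\xi)$ meets $L$ in four points; Equation~(\ref{eq:Kunneth}) applies only when the summing sphere is disjoint from the links (or, in the classical connected sum of pointed links, meets each side in one point). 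This is precisely why the one- and zero-strand cases (Propositions~\ref{prop:K1comp} and~\ref{lem:K1component}) go through by K\"unneth while the two-strand case cannot: the operation $L'\mapsto L'_\alpha$ is not a connected sum of pairs, and in general there is no K\"unneth-type formula for a link obtained by tubing two strands through a neck. Splitting into cases by component count does not repair this, since the obstruction is the absence of the decomposition itself, not the bookkeeping.

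The paper's proof instead isotopes $S\cap(Y\setminus\nu(L))$ --- a twice-punctured sphere with meridional boundary --- to a \emph{product annulus} in the sutured exterior of $L$, and decomposes along it. By Juh\'asz's surface decomposition theorem~\cite[Theorem 1.3]{juhasz2008floer}, this decomposition preserves sutured Floer homology, and the resulting sutured manifold is the exterior of $L'$ in $Y'$ carrying excess parallel sutures; removing each excess pair of parallel sutures divides $\SFH$ by a factor of $V$. The three cases in the statement are then exactly the count of excess suture pairs, governed by whether $\alpha$ joins two components of $L'$ or has both ends on one component (and, in the latter case, whether the band merges or splits that component). If you want to complete your argument, replace the K\"unneth step with this product-annulus decomposition.
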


\begin{proof}
    Suppose $S$, $L$ are as in the statement of the theorem. Let $\xi$ be an arc connecting the two sides of $S$. A tubular neighborhood of $S\cup\xi$ is a once punctured copy of $S^1\times S^2$, so that $Y\cong Y'\#(S^1\times S^2)$ for an appropriate $Y'$. It is clear that $L$ is of the form $L'_\alpha$ for some $L'\subset Y'$ and arc $\alpha$ with $\partial\alpha\subset L$. The intersection of $S$ with the exterior of $L$ can be isotoped so that it yields a product annulus in $S^1\times S^2\setminus \nu(L)$. Decomposing along this product annulus we obtain $L'$. The result follows from~\cite[Theorem 1.3]{juhasz2008floer}, and the behavior of sutured Floer homology under the removal of excess parallel sutures.
\end{proof}

We proceed now restrict our attention to links in $S^1\times S^2$ and prove the second main theorem advertised in the introduction, which we recall here for the reader's convenience:

    \sphericalbraiddetection

Observe that if $|S\cap L|=1$ then $L$ has a $K_1$ component and $\rank(\widehat{\HFK}(L))=0$ per Proposition~\ref{lem:K1component}.  If $|S\cap L|=2$ the case in which $L$ has a single component follows from Proposition~\ref{prop:2int}.

For the statement of the following lemma note that if $L'$ is a homologically non-trivial link in $S^1\times S^2$ with irreducible exterior such that $|L'\cap (\{*\}\times S^2)|>1$, then there is taut surface $\Sigma$ with $[\Sigma]=[(\{*\}\times S^2)\setminus\nu(L)]$. This follows from the proof of Theorem~\ref{thm:hierarchy} and the fact that $\partial_*([(\{*\}\times S^2)\setminus\nu(L)])\neq 0\in H_1(\partial(\nu(L)))$. With this fact at hand, we can make sense of the following Lemma:

\begin{lemma}\label{lem:spherebraidlemma}
    Suppose $L'$ is a homologically non-trivial link in $S^1\times S^2$ with irreducible exterior such that $|L'\cap (*\times S^2)|>1$. Let $(Y,\gamma)$ be the sutured manifold obtained by decomposing $((S^1\times S^2)(L'),\gamma(L'))$ along a taut surface $S$ representing the image of $[\{*\}\times S^2]\in H_2((S^1\times S^2)(L'),\partial(\nu(L')))$ with a maximal number of boundary components, removing any excess parallel sutures, then filling in $\nu(L)$, where $L$ is the $n$-component sub-link of $L'$ consisting of components that do not intersect $S$. Then ${\rank(\SFH(Y(L),\gamma(L)))\geq 2^n}$, with equality if and only if $n=0$, in which case $(Y,\gamma)$ is a product sutured manifold.
\end{lemma}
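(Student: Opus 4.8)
The plan is to analyze the sutured manifold $(Y,\gamma)$ produced by the decomposition and show its sutured Floer homology stabilizes predictably as we fill in the components of $L$. First I would understand the sutured manifold obtained by decomposing $((S^1\times S^2)(L'),\gamma(L'))$ along the taut surface $S$ representing $[\{*\}\times S^2]$ with a maximal number of boundary components. The surface $S$ is a sphere with holes (a planar surface) since it represents $[\{*\}\times S^2]$; after decomposing along it, the resulting sutured manifold, once we remove excess parallel sutures and fill in the components of $L$ that miss $S$, is the object of interest. The key structural input is Juh\'asz' decomposition formula \cite[Theorem 1.3]{juhasz2008floer}, which identifies $\SFH$ of the decomposed manifold with the outer summand of the original, together with the behavior of $\SFH$ under filling in a knot, governed by Lemma~\ref{lem:spectral}.

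Second, I would set up an induction on $n$, the number of components of $L$ not meeting $S$. The base case $n=0$ should assert that $(Y,\gamma)$ is a product sutured manifold with $\rank(\SFH(Y,\gamma))=1=2^0$; this is where one invokes that decomposing along a \emph{taut} sphere-with-holes representing a generator of $H_2$, when no further knot components remain, yields a product, so that $\SFH$ has rank one. Tautness of $S$ and the maximality of its boundary components are precisely what forces the decomposed manifold to be a product: there are no remaining essential surfaces to decompose along, and irreducibility of the exterior of $L'$ rules out spherical obstructions via the argument in Theorem~\ref{thm:hierarchy}. For the inductive step, filling in one more component of $L$ corresponds to passing from a sutured manifold to its knot-exterior version, and Lemma~\ref{lem:spectral} (the rank bound $\rank(\SFH(Y(K),\gamma(K)))\geq 2\rank(\SFH(Y,\gamma))$) gives that each additional component at least doubles the rank. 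This yields $\rank(\SFH(Y(L),\gamma(L)))\geq 2^n$ immediately.

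Third, for the equality case, I would argue that equality $\rank(\SFH(Y(L),\gamma(L)))=2^n$ forces equality at every stage of the induction, i.e. each filling is Floer-simple in the sense of Lemma~\ref{lem:spectral}. One then needs to rule out $n\geq 1$ entirely: if $n\geq 1$, the presence of a genuine knot component $K$ with $[\mu_K]$ non-torsion (which holds because $L'$, hence $L$, is homologically non-trivial) means the span argument from Case~3 of the proof of Theorem~\ref{thm:main} applies, using Lemma~\ref{lem:maxagrading}. Specifically, capping off the decomposing surface with meridional disks strictly increases $\chi$ and hence strictly increases $c(\widehat{S},\overline{t})$ relative to $c(S,t)$, so the $A$-grading span of $\SFH(Y(L),\gamma(L))$ is strictly wider than that of $\SFH(Y,\gamma)\otimes V$, contradicting the rank equality that collapses the spectral sequence. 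This forces $n=0$, and in that case $(Y,\gamma)$ is the product sutured manifold identified in the base case.

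The main obstacle I anticipate is the base case $n=0$: precisely justifying that decomposing the irreducible exterior of the homologically non-trivial $L'$ along a taut planar surface with a maximal number of boundary components yields a genuine product sutured manifold rather than merely a taut one of rank one. This requires carefully using the maximality of the boundary-component count (so that no product annuli or further taut surfaces survive) together with irreducibility (no reducing spheres, as in the termination argument of Theorem~\ref{thm:hierarchy}) to conclude the decomposed piece has no nontrivial topology. The span/grading comparison in the equality case is the second delicate point, but it is largely a transcription of the argument already carried out in Case~3 of Theorem~\ref{thm:main}, so I expect it to go through once the trivialization bookkeeping ($I$, $r$, and $\chi$ of $\widehat{S}$ versus $S$) is handled exactly as there.
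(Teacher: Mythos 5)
Your induction has a genuine gap at its core step. You claim that ``Lemma~\ref{lem:spectral} \ldots gives that each additional component at least doubles the rank. This yields $\rank(\SFH(Y(L),\gamma(L)))\geq 2^n$ immediately.'' It does not: the inequality $\rank(\SFH(Y(K),\gamma(K)))\geq 2\rank(\SFH(Y,\gamma))$ is vacuous whenever an intermediate sutured manifold $(Y(L''),\gamma(L''))$ fails to be taut, since then $\SFH(Y(L''),\gamma(L''))=0$ by \cite[Proposition 9.18]{juhasz2006holomorphic} and the bound degenerates to $\rank\geq 0$. Nothing in the hypotheses guarantees that the exteriors of the sublinks of $L$ inside $(Y,\gamma)$ are taut --- $R_\pm$ could become compressible or fail to be norm-minimizing after filling in components --- and this is precisely the scenario that occupies almost all of the paper's proof. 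There, if no $(n-1)$-component sublink $L''$ has taut exterior, the spectral sequence of Lemma~\ref{lem:spectral} kills every generator, which bounds the dimension of the sutured Floer homology polytope via \cite[Corollary 5.27]{binns2024floer}; one then decomposes along essential product annuli (\cite[Lemma 2.17]{binns2024floer}), and a case analysis of where such annuli can have their boundary --- using the embedding of $(Y(L),\gamma(L))$ in $S^3$, the maximality of the number of boundary components of the decomposing surface $S$, and the classical rank bound $\rank(\widehat{\HFK}(L_1))\geq 2^m$ for links in $S^3$ --- eventually produces a contradiction or reduces to a taut sublink, at which point Theorem~\ref{thm:main} finishes the argument. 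None of this machinery appears in your proposal, and without it the lower bound $2^n$ is unproven for $n\geq 1$.

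Two smaller points. First, you identify the base case $n=0$ as the main obstacle, but it is comparatively easy: tautness of the decomposed manifold gives nonvanishing $\SFH$ by \cite[Theorem 1.4]{juhasz2008floer}, and rank one forces a product structure (the paper invokes the uniqueness of the fibering of $S^1\times S^2$); the real difficulty is the inductive step above. Second, your equality-case argument (reproducing the span comparison from Case~3 of Theorem~\ref{thm:main}) is not wrong in spirit, but the paper simply applies Theorem~\ref{thm:main} as a black box to the Floer-simple component $L\setminus L''$ and contradicts irreducibility of $Y(L)$ via the connect-sum conclusion; you do not need to redo the $c(\widehat S,\overline t)$ bookkeeping, but you do need the taut-sublink setup to be in place before either version of that argument can be run.
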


This result is essentially a modified version of~\cite[Proposition 5.5]{binns2024floer}, which can be thought of as classifying links $L$ in sutured manifolds $(Y,\gamma)$ obtained by decomposing a links $L'$ along a fixed longitudinal surface for some component, where $(Y(L),\gamma(L))$ is of minimal rank or lower. Indeed, for the proof, we follow the proof of~\cite[Proposition 5.5]{binns2024floer}, which is more straightforward in the case at hand, since we are treating the minimal rank case, rather than the next-to-minimal rank case.
\begin{proof}
   Note that $(Y(L),\gamma(L))$ embeds in $S^3$. To see this, one can view $S^1\times S^2$ as $0$-surgery along an unknot, $\iota$, and isotope $L'$ off the handle attached to $\iota$. $(Y(L),\gamma(L))$ can be obtained by removing a neighborhood of $\iota$ from $(S^1\times S^2(L'),\mu_{L'})$, decomposing $(S^1\times S^2(L'),\mu_{L'})$ along a longitudinal surface for $\iota$ to obtain a sutured manifold $(Y',\gamma')$ then attaching a collection of thickened disks to a collection of curves isotopic to the suture on $(Y',\gamma')$ corresponding to $\iota$.

     Suppose, towards a contradiction, that $(Y(L),\gamma(L) )$ is irreducible and $${\rank(\SFH(Y(L),\gamma(L))))\leq 2^n}.$$ We proceed by induction on $n$ and the number of essential product annuli in $(Y(L),\gamma(L))$ with both boundary components contained in $(Y,\gamma)$.

    If $n=0$ then the result holds by assumption that the decomposing surface is taut~\cite[Theorem 1.4]{juhasz2008floer}, and the fact that $S^1\times S^2$ admits a unique structure as surface bundle over $S^1$.

Suppose now that $n\geq1$. Observe that since no component of $L$ intersects the decomposing surface representing $\{*\}\times S^2$, each component $L_i$ of $L$ is null-homologous in $S^1\times S^2$, and hence $$[L_i]=0\in H_1(Y(L\setminus L_i))/H_1(\partial (Y(L\setminus L_i))).$$ Thus, we may apply Lemma~\ref{lem:spectral}. If there is some $n-1$ component sublink of $L$, $L''$, such that $\SFH(Y(L''),\gamma(L''))$ is taut, then by inductive hypothesis $\rank(\SFH(Y(L''),\gamma(L'')))\geq 2^{n-1}$. It follows that $L\setminus L''$ is Floer simple in $\SFH(Y(L''),\gamma(L'')))$, so the result follows from the main theorem, Theorem~\ref{thm:main}, together with the fact that $Y(L)$ is irreducible.
    
   Suppose now that no $n-1$ component sublink $L''$ of $L$,  has the property that $\SFH(Y(L''),\gamma(L''))$ is taut. Then the spectral sequence from $\SFH(Y(L),\gamma(L))$ to $\SFH(Y(L''),\gamma(L''))\otimes V$ kills every generator, for every $L''$. It follows from~\cite[Corollary 5.27]{binns2024floer} that $\dim(P(Y(L''),\gamma(L'')))\leq n$, where $P(Y,\gamma)$ denotes the sutured Floer polytope of $(Y,\gamma)$; see~\cite{juhasz2010sutured} for details.
    By repeated applications of~\cite[Lemma 2.17]{binns2024floer} we can decompose $(Y(L),\gamma(L))$ along a finite number of product annuli until the genus of the boundary of the resulting sutured manifold equals the dimension of its sutured Floer homology polytope. We investigate the behavior of such product annuli.

    Let $A$ be a product annulus in $(Y(L),\gamma(L))$. First note that $A$ cannot have boundary components on components of $\partial (Y(L))$ corresponding to distinct components of $L$, as else the meridian of one component of $L$ would be isotopic to that of another component, a contradiction, since $L\subset Y\subset S^3$.

    Suppose $A$ has one boundary component on a component of $\partial(Y(L))$ corresponding to a link component and another boundary component on $\partial Y$. Then there is another taut surface representing $${[\{*\}\times S^2]\in H_2(S^1\times S^2(L'),\partial(\nu(L')))}$$ with a larger number of boundary components, namely $(\Sigma\setminus \nu(\partial A))\cup A_\pm$, where $A_\pm$ are appropriate push-offs of $A$ in the positive and negative normal direction, contradicting our assumption.

If $\partial A$ is contained in a component of $\partial(Y(L))$ corresponding to a single component of $L$, then again using the fact that $S^3$ is atoroidal, we can decompose $(Y(L),\gamma(L))$ along $A$ to obtain the disjoint union of an $m\geq 2$ component exterior of a link $L_1$ in $S^3$ and an $n-m+1$ component link $L_2$ in $Y$. It follows that \begin{align*}
    \rank(\widehat{\HFK}(L_1,S^3))\cdot\rank(\SFH(Y(L_2),\gamma(L_2))\leq 2^n.
\end{align*}

But we also have that  $\rank(\widehat{\HFK}(L_1))\geq 2^m$, so that ${\rank(\SFH(Y(L_2),\gamma(L_2))\leq 2^{n-m}}$. This contradicts the inductive hypothesis, since $L_2$ has $n-m+1$ components.

Suppose now that $A$ is an essential product annulus with both boundary components in $\partial Y$. Then decomposing $(Y(L),\gamma(L)))$ along $A$ yields the sutured exterior of $L$ viewed as a link in some sutured manifold $(Y',\gamma')$. In fact, we have $\rank(\SFH(Y(L),\gamma(L)))\leq 2^n$, which allows us to proceed by induction.

  Thus, it remains only to treat the case in which there is no product annulus in $(Y(L),\gamma(L))$ with a boundary component in $\partial_L$. Note again that $(Y(L),\gamma(L))$ cannot have spherical boundary components since we are assuming $L'$ has irreducible exterior. Observe then that $g(\partial Y(L))> n$. If no $n-1$ component sublink $L''$ of $L$,  has the property that $\SFH(Y(L''),\gamma(L''))$ is taut. Then the spectral sequence from $\SFH(Y(L),\gamma(L))$ to $\SFH(Y(L''),\gamma(L''))\otimes V$ kills every generator, for every $L''$. It follows from~\cite[Corollary 5.27]{binns2024floer} that $\dim(P(Y(L''),\gamma(L'')))\leq n$, a contradiction. Thus at least one $n-1$ component sublink of $L$ has the property that $\SFH(Y(L''),\gamma(L''))\not\cong 0$. By inductive hypothesis $\rank(\SFH(Y(L''),\gamma(L'')))\geq 2^{n-1}$, so that $L$ is Floer minimal in $(Y,\gamma)$. The result now follows from an application of Theorem~\ref{thm:main}. \end{proof}

    \begin{proof}[Proof of Theorem~\ref{thm:sphericalbraiddetection}]
        This is a consequence of Lemma~\ref{lem:spherebraidlemma}.
    \end{proof}

    We can do slightly better in the special case that $|L'\cap(\{*\}\times S^2)|=2$, where we do not need to require that $L$ is non-null-homologous.
\begin{corollary}\label{prop:2int}
    Let $S$ be a non-separating sphere in $S^1\times S^2$ and $K$ be a knot in $S^1\times S^2$ with $|S\cap K|=2$ and $\rank(\widehat{\HFK}(K,S^1\times S^2))=2$. Then $K$ is $K_2$.
\end{corollary}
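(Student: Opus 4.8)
The plan is to reduce the statement about the knot $K$ in $S^1\times S^2$ to the classification already established in Lemma~\ref{lem:spherebraidlemma}, using the preceding proposition that describes how a knot meeting a non-separating sphere in two points decomposes. Since $|S\cap K|=2$ and $\rank(\widehat{\HFK}(K,S^1\times S^2))=2$, the band-surgery description gives $S^1\times S^2\cong Y'\#(S^1\times S^2)$ and $K=L'_\alpha$ for some $L'\subset Y'$ and framed arc $\alpha$. The rank hypothesis, together with the behavior of $\widehat{\HFK}$ under this operation recorded in the proposition immediately preceding Theorem~\ref{thm:sphericalbraiddetection}, pins down both the number of components of $L'$ and the value of $\rank(\widehat{\HFK}(L',Y'))$, which is the starting point.

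First I would split into cases according to the relationship between the number of components of $K$ and of $L'$. Since $K$ is a knot (one component) and $|S\cap K|=2$, the product-annulus decomposition along (the exterior portion of) $S$ either separates $K$ into a two-component link or leaves a knot, depending on whether the arc $\alpha$ connects $K$ to itself coherently. The rank formula $\widehat{\HFK}(K)\cong\widehat{\HFK}(L')\otimes V^{\otimes j}$ for the appropriate $j\in\{0,1,2\}$ forces $\rank(\widehat{\HFK}(L',Y'))$ to be $2$, $1$, or $2/2=1$ respectively; combined with $\rank(\widehat{\HFK}(K))=2$, only the case yielding $\rank(\widehat{\HFK}(L',Y'))=1$ survives in a way consistent with $L'$ being Floer simple. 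I would then invoke the main theorem, Theorem~\ref{thm:main}, to conclude that $L'$ is a Floer simple knot in a closed manifold, and in the $S^1\times S^2$ setting Ni's classification of null-homologous Floer simple knots (cited as~\cite[Theorem 1.3]{ni2014homological}) forces $L'$ to be the unknot in $S^3$, so that $Y'\cong S^3$.

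With $Y'\cong S^3$ and $L'$ the unknot, the band-and-surgery construction $K=L'_\alpha$ rebuilds $K$ as a knot in $S^1\times S^2$ obtained by banding the unknot along a framed arc and doing $0$-surgery on a meridian; tracing through the construction shows that such a $K$ meets the non-separating sphere twice and is isotopic to the closure $\widehat{\sigma_1^{2n+1}}$ of an odd power of the Artin generator, all of which are isotopic to $K_2$ by the remark preceding the theorem. The main obstacle I anticipate is the bookkeeping in the case analysis: matching the component counts of $K$ and $L'$ with the correct tensor factor $V^{\otimes j}$ and ruling out the spurious cases (e.g.\ where $L'$ has more components than $K$, forcing $\rank(\widehat{\HFK}(L',Y'))=2$ but with an extra component) requires care, as does verifying that the framing and arc data genuinely produce $K_2$ rather than some other band-surgery output. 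The geometric identification of $L'_\alpha$ with $\widehat{\sigma_1^{2n+1}}$, rather than the Floer-theoretic input, is where I expect the real work to lie.
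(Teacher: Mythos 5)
Your reduction to the band--surgery picture does not close the argument, and the place where it fails is precisely the step you flag as ``where the real work lies.'' Knowing that $K=L'_\alpha$ with $L'$ the unknot in $Y'\cong S^3$ (or the two-component unlink --- a case your rank bookkeeping does not actually exclude, since the unlink also has $\rank(\widehat{\HFK})=2$) determines $K$ only up to the choice of the framed arc $\alpha$, and different choices of $\alpha$ give genuinely non-isotopic knots in $S^1\times S^2$ meeting $S$ twice: the band can be knotted, or can clasp around $L'$, producing for instance local connect sums or knots whose complementary sutured manifold is not a product. None of these are $K_2$, so ``tracing through the construction'' cannot show that every such $L'_\alpha$ is $\widehat{\sigma_1^{2n+1}}$; the rank hypothesis must be brought to bear on the \emph{complement} of $K$ after the decomposition, not merely on the residual link $L'$. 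The band--surgery proposition records only $L'$ and forgets $\alpha$, which is exactly the data you would need to recover.

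The paper's proof avoids this by working one level down, with the sutured manifold rather than the link. It decomposes $(S^1\times S^2(K),\mu_K)$ along the product annulus $S\setminus\nu(K)$, checks directly that the resulting sutured manifold $(Y,\gamma)$ is taut and irreducible, and uses $\rank(\widehat{\HFK}(K))=2$ to get $\rank(\SFH(Y,\gamma))=1$; by Juh\'asz's product detection theorem $(Y,\gamma)$ is then a product sutured manifold, and since $R_\pm(\gamma)$ is an annulus it must be $(A\times[0,1],(\partial A)\times[0,1])$. That product structure is what identifies $K$ as a spherical $2$-braid closure, and $K_2$ is the unique such knot. If you want to salvage your outline, you need to replace the final ``rebuild $K$ from $L'$'' step with an argument that the complementary sutured manifold is a product --- at which point you have essentially reproduced the paper's proof and the band--surgery detour becomes unnecessary. (Separately, invoking Theorem~\ref{thm:main} and Ni's classification to identify a rank-one knot in $S^3$ as the unknot is overkill; Ozsv\'ath--Szab\'o genus detection suffices.)
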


\begin{proof}
Observe that $(S^1\times S^2(L),\mu_L)$ contains an embedded annulus $A$ representing $[S\setminus \nu(L)]$. Decompose $(S^1\times S^2(L),\mu_L)$ along $A$ and remove excess parallel sutures. Call the resulting sutured manifold $(Y,\gamma)$. Suppose that $(Y,\gamma)$ is not taut. Since $\chi(R_\pm(\gamma))=0$, the surfaces $R_\pm(\gamma)$ are Thurston norm minimizing. Moreover, since $|S\cap K|=2$, and $(S^1\times S^2(L),\mu_L)$ is irreducible and hence so too is $(Y,\gamma)$. Thus $(Y,\gamma)$ is compressible. Since $(Y,\gamma)$ is irreducible, the boundary of any compressing disk can be isotoped to a homologically essential, simple closed curve in $R_\pm(\gamma)\cong A$. This is impossible. Thus $(Y,\gamma)$ is taut and, we see that $\rank(\SFH(Y,\gamma))=1$. Since $(Y,\gamma)$ is irreducible, it is a product sutured manifold. Indeed, we can see that it must be of the form $(A\times[0,1],(\partial A)\times[0,1])$. It follows that $K$ is $\widehat{\beta}$ for some $2$-braid $\beta$. The result follows, since $K_2$ is the unique spherical $2$-braid closure.
\end{proof}

The author and Dey gave a classification of links with link Floer homology of next to minimal rank in the maximal non-trivial grading corresponding to a given component~\cite[Theorem 5.1]{binns2024floer}. It is natural to ask if there is a version of that classification in the context of spherical braids. It is likely that the same strategy as used in~\cite{binns2024floer} could be adapted to the current setting, but since the proof would necessarily be more involved than in the minimal rank case presented above, we do not pursue this here.

We proceed instead to classify homologically non-trivial links with the same link Floer homology as spherical $3$-braids. We begin The following is a basic result concerning the spherical braid group (with $n=3$, which is the dicyclic group of order $12$). We include it here for the sake of completeness.

\begin{lemma}\label{lem:3braidss1s2}
    Let $\beta$ be a $3$-braid. Then $\widehat{\beta}$ is isotopic to one of $\widehat{\mathbf{1}_3}$, $\widehat{\sigma_1^k}$ for $1\leq k\leq 3$, $\widehat{\sigma_1\sigma_2}$, or $\widehat{\sigma_1^3\sigma_2}$ all of which are non-isotopic.
\end{lemma}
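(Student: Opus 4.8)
The plan is to classify spherical $3$-braid closures by first understanding the relevant braid group and then reducing closures to a manageable normal form. The key observation is that the closure $\widehat{\beta}$ depends only on the conjugacy class of $\beta$ in the spherical braid group, since cyclically permuting the strands of a braid yields an isotopic closure in $S^1 \times S^2$. Thus the problem reduces to enumerating the conjugacy classes of the spherical braid group on $3$ strands. As the statement hints, this group is the dicyclic group of order $12$, which I would identify explicitly as the quotient of the usual Artin braid group $B_3 = \langle \sigma_1, \sigma_2 \mid \sigma_1\sigma_2\sigma_1 = \sigma_2\sigma_1\sigma_2 \rangle$ by the additional spherical relation $\sigma_1 \sigma_2^2 \sigma_1 = 1$ (equivalently, the relation that pulling a strand around the back of the sphere is trivial). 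With this presentation in hand, I would compute the conjugacy classes directly.

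First I would establish the group-theoretic facts: present the spherical braid group $SB_3$, verify it has order $12$, and confirm it is dicyclic. This is a finite computation — one lists the elements, identifies the center and commutator subgroup, and partitions into conjugacy classes. A dicyclic group of order $12$ has $6$ conjugacy classes, so I expect to find six candidate closures, but several of these will coincide as links in $S^1 \times S^2$ (for instance, because $\widehat{\sigma_1^{2n+1}}$ are all isotopic, as already noted in the text, and because orientation-reversal or symmetry of the sphere can identify certain closures). I would then match each conjugacy class to one of the claimed representatives $\widehat{\mathbf{1}_3}$, $\widehat{\sigma_1^k}$ for $1 \leq k \leq 3$, $\widehat{\sigma_1\sigma_2}$, and $\widehat{\sigma_1^3 \sigma_2}$, checking that these seven-or-so words land in distinct classes after accounting for the extra isotopies available in $S^1 \times S^2$.

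The second half is to prove the listed closures are pairwise \emph{non}-isotopic. Here I would use invariants: the number of components of $\widehat{\beta}$ (determined by the cycle type of the underlying permutation of $\beta$), the homology class $[\widehat{\beta}] \in H_1(S^1\times S^2) \cong \Z$ (determined by the exponent sum / writhe modulo the spherical relation), and finally the link Floer homology itself, which by the earlier results in this paper distinguishes spherical braid closures of different indices and component counts. The permutation and homological data should already separate most cases; for any remaining pair with the same permutation and homology — such as $\widehat{\sigma_1}$ versus $\widehat{\sigma_1^3}$ — I would invoke a finer invariant, either a direct computation of $\widehat{\HFK}$ via the sutured methods developed above or an appeal to the distinctness of the underlying knot types in $S^3$ after a standard handle-slide.

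The main obstacle I anticipate is the bookkeeping in the spherical relation: the spherical braid group is a quotient of $B_3$, and it is easy to over- or under-count conjugacy classes or to mistakenly identify two words that are in fact conjugate in $SB_3$ but not in $B_3$ (or vice versa). Getting the presentation and the order-$12$ structure exactly right, and then carefully tracking which closures become isotopic under the full symmetry available in $S^1 \times S^2$ (as opposed to in the solid-torus/annulus setting of ordinary braids), is the delicate step. Once the group theory is pinned down, the topological identifications and the non-isotopy arguments via component number, homology, and link Floer homology should follow routinely.
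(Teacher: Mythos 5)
Your proposal is correct and follows essentially the same route as the paper: both reduce the problem to conjugacy classes in the spherical $3$-braid group, identify that group as the dicyclic group of order $12$ with six conjugacy classes, and match the six classes to the listed closures (the paper does this by rewriting words with the relations $\sigma_i^4=1$ and $\sigma_1^2\sigma_2^2=1$ rather than by enumerating the group, but that difference is cosmetic). If anything, your plan to certify non-isotopy of the closures via component count, the class in $H_1(S^1\times S^2)$, and link Floer homology is more careful than the paper's, which stops at the observation that the six words are pairwise non-conjugate.
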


\begin{proof}
    We start with an arbitrary spherical $3$-braid, $\beta$, written in terms of the standard Artin generators --- $\sigma_1,\sigma_2$ --- and show that it can be rewritten, up to conjugation, as a word of the desired form.
    
    First note that in the spherical $3$-braid group $\sigma_i^4=\mathbf{1}_3$. Thus $\widehat{\beta}$ is isotopic to $\widehat{\beta'}$ with $\beta'$ a braid word containing only positive powers of the standard Artin generators. We also have that $\sigma_1^2\sigma_2^2=1$, so we can remove any occurrence $\sigma_2^2$ from $\beta'$. Thus $\beta'$ can be written as a product of words of the form $\sigma_i^k$ or $\sigma_1^k\sigma_2$ with $1\leq k\leq 3$. Observe that if $\sigma_1$ appears in $\beta'$ we can take it to be the first letter by an isotopy or conjugation. Thus unless $\beta'$ is $\mathbf{1}_1$, $\sigma_1^k$ for $1\leq k\leq 3$ or $\sigma_2$, we have that $\beta'$ can be written as a product of words $w_k:\sigma_1^k\sigma_2$ with $1\leq k\leq 3$. Observe now that $\sigma_2$ is conjugate to $\sigma_1$, so it remains only to consider product of words $w_k:\sigma_1^k\sigma_2$ with $1\leq k\leq 3$.

Observe that $\sigma_1^2$ commutes with $\sigma_2$, so we may assume that the first word $w_k$ in our expression for $\beta'$ have $k=1$. Since $\sigma_1\sigma_2\sigma_1=\sigma_2\sigma_1\sigma_2$, for $m>0$, $w_k(\sigma_1\sigma_2)^m$ is equivalent to the words to a shorter word. Thus we are left only with the words $w_k:=\sigma_1^k\sigma_2$ with $1\leq k\leq 3$. To conclude, observe that in the spherical $3$-braid group $\sigma_1^2\sigma_3$ is conjugate to $\sigma_2^3$, which is conjugate to $\sigma_1^3$, concluding the proof, since we the dicyclic group of order twelve has six conjugacy classes.\end{proof}

Note in particular that there are two non-isotopic spherical $3$-braid closures with $n$ components for ${1\leq n\leq 3}$. 
Martin used classifications of (classical) $3$-braid  representations of the unknot in her proof that Khovanov homology detects $T(2,6)$~\cite{martin2022khovanov}. Similar ideas were used to deduce detection results in~\cite{binns2020knot,binns2024closures}. Likewise, Lemma~\ref{lem:3braidss1s2} enables us to deduce some link detection results. For the statement --- and the proof --- of these results we note that for an $m$-component link, $L$, $\widehat{\HFL}(L)$ carries another $m-1$-gradings (in addition to the $A_{[\Sigma]}$-grading, where $\Sigma$ is the image of $\{*\}\times S^2$ in the exterior of $L$) corresponding to evaluating relative Chern classes on homology classes $\alpha\in H_2((S^1\times S^2)\setminus \nu(L),\partial (\nu(L)))$ with non-trivial image in $H_1(\partial (\nu(L)))$.

\sphericalthreebraids

Note that the split case follows readily from this result, the K\"unneth formula (i.e. Equation~(\ref{eq:Kunneth})) along with  Ni's classification of link with link Floer homology of minimal rank in $S^3$~\cite[Proposition 1.4]{ni2014homological}.

Observe that, if $n=3$ in the statement of the Corollary, the maximal $A_{[\Sigma]}$-grading in which ${\widehat{\HFK}(L,S^1\times S^2)}$ is non-trivial is one and that ${\rank(\widehat{\HFK}(L,S^1\times S^2;A_{[\Sigma]}=1))=2^{{m}}}$, where $m\leq 3$ is the number of components of $L$.

\begin{proof}
Suppose $L$ is a homologically non-trivial $m-$component link with the link Floer homology of $\widehat{\beta}$. If the exterior of $L$ is reducible, then there is an embedded $2$-sphere $S$ in the exterior of $L$. If $S$ is non-separating, then    $L$ would be homologically trivial, a contradiction. If $S$ is separating then $L$ would be split contradicting our assumption.

We have three cases according to the value of $n$. If $n=1$ then $\widehat{\HFL}(\widehat{\beta}=0)$, so that the sutured exterior of $L$ is not taut and hence compressible. It follows that $L$ is the spherical one braid. If $n=2$ then the the maximal non-trivial $A_{[\Sigma]}$-grading of $\widehat{\beta}$ is $0$, and $\rank(\widehat{\HFK}(L,S^1\times S^2;A_{[\Sigma]}=0))=2^m$. It follows that $|L\cap S^2|=2$ since sutured Floer homology detects the Thurston norm of link exteriors~\cite[Proposition 7.7]{friedl2011decategorification}, and consequently that $L$ is a spherical braid by Theorem~\ref{thm:sphericalbraiddetection}. It thus remains to deal with the case that $n=3$. We duly assume that $n=3$ henceforth.
    
    Now, since $\rank(\widehat{\HFK}(L,S^1\times S^2;A_{[\Sigma]}=1))=2^{{m}}$ and $L$ is homologically non-trivial, Theorem~\ref{thm:sphericalbraiddetection} implies that there is a representative $\Sigma$ of $(*\times S^2)\setminus \nu(L)$ such that decomposing $((S^1\times S^2)(L),\mu(L))$ along $\Sigma$ yields a product sutured manifold. It follows that $L$ is of the form $\widehat{\beta}$ for some spherical braid $\beta$. Since the maximal $A_{[\Sigma]}$-grading in which $\rank(\widehat{\HFK}(L,S^1\times S^2)$ is non-trivial is one, it follows from the fact that sutured Floer homology detects the Thurston norm of link exteriors~\cite[Proposition 7.7]{friedl2011decategorification} that $\Sigma$ has Euler characteristic $-1$. It follows that $\beta$ is a $3$-braid or $\beta$ is a one braid in a torus-bundle over $S^1$. Since $S^1\times S^2$ is not a torus-bundle over $S^1$  --- which follows, for example, from the fact that $S^1\times S^2$ is reducible while torus bundles have universal cover $\R^3$ and are hence irreducible --- the result follows.\end{proof}

\bibliographystyle{plain}
\bibliography{bibliography}

@article{HolomorphicdiskslinkinvariantsandthemultivariableAlexanderpolynomial,
	title = {Holomorphic disks, link invariants and the multi-variable {Alexander} polynomial},
	volume = {8},
	issn = {1472-2739},
	url = {https://msp.org/agt/2008/8-2/p01.xhtml},
	doi = {10.2140/agt.2008.8.615},
	number = {2},
	urldate = {2020-06-30},
	journal = {Algebraic \& Geometric Topology},
	author = {Ozsv\'{a}th, Peter and Szab\'{o}, Zolt\'{a}n},
	month = may,
	year = {2008},
	note = {Publisher: Mathematical Sciences Publishers},
	pages = {615--692},
	file = {Full Text PDF:/Users/gagemartin/Zotero/storage/WSWURU7B/OzsvÃ¡th and SzabÃ³ - 2008 - Holomorphic disks, link invariants and the multi-v.pdf:application/pdf;Snapshot:/Users/gagemartin/Zotero/storage/S6AG9GC4/p01.html:application/xhtml+xml}
}

@article{li2022floer,
  title={On {F}loer minimal knots in sutured manifolds},
  author={Li, Zhenkun and Xie, Yi and Zhang, Boyu},
  journal={Transactions of the American Mathematical Society, Series B},
  volume={9},
  number={17},
  pages={499--516},
  year={2022}
}

@article{Holomorphicdisksandknotinvariants,
	title = {Holomorphic disks and knot invariants},
	volume = {186},
	issn = {0001-8708},
	url = {http://www.sciencedirect.com/science/article/pii/S0001870803002330},
	doi = {10.1016/j.aim.2003.05.001},
	abstract = {We define a Floer-homology invariant for knots in an oriented three-manifold, closely related to the Heegaard Floer homologies for three-manifolds defined in an earlier paper. We set up basic properties of these invariants, including an Euler characteristic calculation, and a description of the behavior under connected sums. Then, we establish a relationship with HF+ for surgeries along the knot. Applications include calculation of HF+ of three-manifolds obtained by surgeries on some special knots in S3, and also calculation of HF+ for certain simple three-manifolds which fiber over the circle.},
	language = {en},
	number = {1},
	urldate = {2020-06-30},
	journal = {Advances in Mathematics},
	author = {Ozsv\'{a}th, Peter and Szab\'{o}, Zolt\'{a}n},
	month = aug,
	year = {2004},
	pages = {58--116},
	file = {ScienceDirect Full Text PDF:/Users/gagemartin/Zotero/storage/5V2D8FWY/OzsvÃ¡th and SzabÃ³ - 2004 - Holomorphic disks and knot invariants.pdf:application/pdf;ScienceDirect Snapshot:/Users/gagemartin/Zotero/storage/Y4865YCH/S0001870803002330.html:text/html}
}

@article{ozsvath2004holomorphic,
  title={Holomorphic disks and topological invariants for closed three-manifolds},
  author={Ozsv{\'a}th, Peter and Szab{\'o}, Zolt{\'a}n},
  journal={Annals of Mathematics},
  pages={1027--1158},
  year={2004},
  publisher={JSTOR}
}

@article{Rasmussen,
	title = {Floer homology and knot complements},
	url = {http://arxiv.org/abs/math/0306378},
	abstract = {We use the Ozsvath-Szabo theory of Floer homology to define an invariant of knot complements in three-manifolds. This invariant takes the form of a filtered chain complex, which we call CF\_r. It carries information about the Floer homology of large integral surgeries on the knot. Using the exact triangle, we derive information about other surgeries on knots, and about the maps on Floer homology induced by certain surgery cobordisms. We define a certain class of {\textbackslash}em\{perfect\} knots in S{\textasciicircum}3 for which CF\_r has a particularly simple form. For these knots, formal properties of the Ozsvath-Szabo theory enable us to make a complete calculation of the Floer homology. This is the author's thesis; many of the results have been independently discovered by Ozsvath and Szabo in math.GT/0209056.},
	urldate = {2020-06-30},
	journal = {arXiv:math/0306378},
	author = {Rasmussen, Jacob},
	month = jun,
	year = {2003},
	note = {arXiv: math/0306378},
	keywords = {57M27, 57R58, Mathematics - Geometric Topology},
	annote = {Comment: 83 pages; Harvard thesis},
	file = {arXiv Fulltext PDF:/Users/gagemartin/Zotero/storage/XLECM4DQ/Rasmussen - 2003 - Floer homology and knot complements.pdf:application/pdf;arXiv.org Snapshot:/Users/gagemartin/Zotero/storage/GBY7CABG/0306378.html:text/html}
}

@article{martin2022khovanov,
  title={Khovanov homology detects $ {T} (2, 6) $},
  author={Martin, Gage},
  journal={Mathematical Research Letters},
  volume={29},
  number={3},
  pages={835--850},
  year={2022},
  publisher={International Press of Boston}
}

@article{kronheimer2011knot,
  title={Knot homology groups from instantons},
  author={Kronheimer, Peter B and Mrowka, Tomasz S},
  journal={Journal of Topology},
  volume={4},
  number={4},
  pages={835--918},
  year={2011},
  publisher={London Mathematical Society}
}

@article{hedden2011floer,
  title={On {F}loer homology and the {B}erge conjecture on knots admitting lens space surgeries},
  author={Hedden, Matthew},
  journal={Transactions of the American Mathematical Society},
  volume={363},
  number={2},
  pages={949--968},
  year={2011}
}

@article{friedl2011decategorification,
  title={The decategorification of sutured {F}loer homology},
  author={Friedl, Stefan and Juh{\'a}sz, Andr{\'a}s and Rasmussen, Jacob},
  journal={Journal of Topology},
  volume={4},
  number={2},
  pages={431--478},
  year={2011},
  publisher={Wiley Online Library}
}

@article{scharlemann1989sutured,
  title={Sutured manifolds and generalized {T}hurston norms},
  author={Scharlemann, Martin},
  journal={Journal of Differential Geometry},
  volume={29},
  number={3},
  pages={557--614},
  year={1989},
  publisher={Lehigh University}
}

@incollection{M,
	title = {Exchangable {Braids}},
	isbn = {978-0-521-26982-7},
	abstract = {In this volume, which is dedicated to H. Seifert, are papers based on talks given at the Isle of Thorns conference on low dimensional topology held in 1982.},
	language = {en},
	booktitle = {Low {Dimensional} {Topology}},
	publisher = {Cambridge University Press},
	author = {Morton, H. R.},
	collaborator = {Fenn, Roger and Hitchin, N. J.},
	month = jul,
	year = {1985},
	note = {},
	keywords = {Mathematics / Geometry / General, Mathematics / Topology}
}

@article{kim2020links,
  title={Links of Second Smallest Knot {F}loer Homology},
  author={Kim, Juhyun},
  journal={arXiv preprint arXiv:2011.11810},
  year={2020}
}

@article{binns2020knot,
    AUTHOR = {Binns, Fraser and Martin, Gage},
     TITLE = {Knot {F}loer homology, link {F}loer homology and link
              detection},
   JOURNAL = {Algebr. Geom. Topol.},
  FJOURNAL = {Algebraic \& Geometric Topology},
    VOLUME = {24},
      YEAR = {2024},
    NUMBER = {1},
     PAGES = {159--181},
      ISSN = {1472-2747,1472-2739},
   MRCLASS = {57K10 (57K18)},
  MRNUMBER = {4721366},
       DOI = {10.2140/agt.2024.24.159},
       URL = {https://doi.org/10.2140/agt.2024.24.159},
}

@article{kronheimer2011khovanov,
  title={Khovanov homology is an unknot-detector},
  author={Kronheimer, Peter B and Mrowka, Tomasz S},
  journal={Publications math{\'e}matiques de l'IH{\'E}S},
  volume={113},
  number={1},
  pages={97--208},
  year={2011},
  publisher={Springer}
}

@incollection {Floerinstatonhomologysurgeryknots,
    AUTHOR = {Floer, Andreas},
     TITLE = {Instanton homology, surgery, and knots},
 BOOKTITLE = {Geometry of low-dimensional manifolds, 1 ({D}urham, 1989)},
    SERIES = {London Math. Soc. Lecture Note Ser.},
    VOLUME = {150},
     PAGES = {97--114},
 PUBLISHER = {Cambridge Univ. Press, Cambridge},
      YEAR = {1990},
      ISBN = {0-521-39978-5},
   MRCLASS = {57R57 (57N10 57R65 58G05)},
  MRNUMBER = {1171893},
MRREVIEWER = {Ronald\ J.\ Stern},
}

@article{ni2014homological,
  title={Homological actions on sutured {F}loer homology},
  author={Ni, Yi},
  journal={Mathematical Research Letters},
  volume={21},
  number={5},
  pages={1177--1197},
  year={2014},
  publisher={International Press}
}

@article{kronheimer2010knots,
  title={Knots, sutures, and excision},
  author={Kronheimer, Peter and Mrowka, Tomasz},
  journal={Journal of Differential Geometry},
  volume={84},
  number={2},
  pages={301--364},
  year={2010},
  publisher={Lehigh University}
}

@article{juhasz2008floer,
  title={Floer homology and surface decompositions},
  author={Juh{\'a}sz, Andr{\'a}s},
  journal={Geometry \& Topology},
  volume={12},
  number={1},
  pages={299--350},
  year={2008},
  publisher={Mathematical Sciences Publishers}
}

@article {Wangsplitcharp,
    AUTHOR = {Wang, Joshua},
     TITLE = {Split link detection for {$\mathfrak{sl}(P)$} link homology in characteristic {$P$}},
   JOURNAL = {J. Topol.},
  FJOURNAL = {Journal of Topology},
    VOLUME = {16},
      YEAR = {2023},
    NUMBER = {2},
     PAGES = {806--821},
      ISSN = {1753-8416,1753-8424},
   MRCLASS = {57K18},
  MRNUMBER = {4637977},
MRREVIEWER = {William\ Rushworth},
       DOI = {10.1112/topo.12297},
       URL = {https://doi.org/10.1112/topo.12297},
}

@article{binns2024closures,
  title={Closures of 3-braids and detection},
  author={Binns, Fraser},
  journal={Pacific Journal of Mathematics},
  volume={340},
  number={1},
  pages={1--36},
  year={2025},
  publisher={Mathematical Sciences Publishers}
}

@article{binns2024floer,
  title={Floer homology, clasp-braids and detection results},
  author={Binns, Fraser and Dey, Subhankar},
  journal={arXiv preprint arXiv:2405.11224},
  year={2024}
}

@article{juhasz2006holomorphic,
  title={Holomorphic discs and sutured manifolds},
  author={Juh{\'a}sz, Andr{\'a}s},
  journal={Algebraic \& Geometric Topology},
  volume={6},
  number={3},
  pages={1429--1457},
  year={2006},
  publisher={Mathematical Sciences Publishers}
}

@article {FLoerinstanton3manifolds,
    AUTHOR = {Floer, Andreas},
     TITLE = {An instanton-invariant for {$3$}-manifolds},
   JOURNAL = {Comm. Math. Phys.},
  FJOURNAL = {Communications in Mathematical Physics},
    VOLUME = {118},
      YEAR = {1988},
    NUMBER = {2},
     PAGES = {215--240},
      ISSN = {0010-3616,1432-0916},
   MRCLASS = {57N10 (58G05 58G10 58G25)},
  MRNUMBER = {956166},
MRREVIEWER = {Ronald\ J.\ Stern},
       URL = {http://projecteuclid.org/euclid.cmp/1104161987},
}

@article{ozsvath2004genusbounds,
  title={Holomorphic disks and genus bounds},
  author={Ozsv{\'a}th, Peter and Szab{\'o}, Zolt{\'a}n},
  journal={Geometry \& Topology},
  volume={8},
  number={1},
  pages={311--334},
  year={2004},
  publisher={Mathematical Sciences Publishers}
}

@article{gabai1983foliations,
  title={Foliations and the topology of 3-manifolds},
  author={Gabai, David},
  journal={Journal of Differential Geometry},
  volume={18},
  number={3},
  pages={445--503},
  year={1983},
  publisher={International Press of Boston, Inc.}
}

@article{juhasz2010sutured,
  title={The sutured {F}loer homology polytope},
  author={Juh{\'a}sz, Andr{\'a}s},
  journal={Geometry \& Topology},
  volume={14},
  number={3},
  pages={1303--1354},
  year={2010},
  publisher={Mathematical Sciences Publishers}
}

\end{document}